\theoremstyle{plain}
 \newtheorem{theorem}{Theorem}[section]
 \newtheorem{lem}{Lemma}[section]
\theoremstyle{Definition}
 \newtheorem{exm}{Example}[section]
 \newtheorem{dfn}{Definition}[section]
\theoremstyle{remark}
 \newtheorem{rem}{Remark}[section]
 \numberwithin{equation}{section}
\renewcommand{\geq}{\geqslant}
\title[A Study of Fatou Set, Julia set and Escaping Set in  Nearly Abelian...]{A Study of Fatou Set, Julia set and Escaping Set in Nearly Abelian Transcendental Semigroup}
\subjclass[2010]{37F10, 30D05}
\keywords{Transcendental semigroup, escaping set, nearly abelian semigroup.}
\author[B. H. Subedi]{\bfseries  Bishnu Hari Subedi}
\address{ 
Central Department of Mathematics \\ 
Institute of Science and Technology   \\ 
Tribhuvan University   \\ 
Kirtipur, Kathmandu\\
Nepal}
\email{subedi.abs@gmail.com / subedi\_bh@cdmathtu.edu.np }
\author[A. Singh]{Ajaya Singh}
\address{Central Department of Mathematics, Institute of Science and Technology, Tribhuvan University, Kirtipur, Kathmandu, Nepal }
\email{singh.ajaya1@gmail.com / singh\_a@cdmathtu.edu.np} 
\thanks{Supported by ... } 
\thanks{This research work of first author is supported from PhD faculty fellowship of University Grants Commission, Nepal. } 
\begin{document}

{\begin{flushleft}\baselineskip9pt\scriptsize
MANUSCRIPT
\end{flushleft}}
\vspace{18mm} \setcounter{page}{1} \thispagestyle{empty}

\begin{abstract}
We mainly generalize the notion of abelian transcendental semigroup to nearly abelian transcendental semigroup. We prove that Fatou set, Julia set and escaping set of nearly abelian transcendental semigroup are completely invariant. We investigate no wandering domain theorem in such a transcendental semigroup. We also obtain results on a complete generalization of the classification of  periodic Fatou components.
\end{abstract}

\maketitle

\section{Introduction}
\subsection{A short review of classical transcendental dynamics}
Throughout this paper, we denote the \textit{complex plane} by $\mathbb{C}$ and set of integers greater than zero by $\mathbb{N}$. 
We assume the function $f:\mathbb{C}\rightarrow\mathbb{C}$ is \textit{transcendental entire function} unless otherwise stated. 
For any $n\in\mathbb{N}, \;\; f^{n}$ always denotes the nth \textit{iterates} of $f$. Let $ f $ be a transcendental entire function. The set of the form
$$
I(f) = \{z\in \mathbb{C}:f^n(z)\rightarrow \infty \textrm{ as } n\rightarrow \infty \}
$$
is called an \textit{escaping set} and any point $ z \in I(S) $ is called \textit{escaping point}. For transcendental entire function $f$, the escaping set $I(f)$ was first studied by A. Eremenko \cite{ere}. He showed that 
 $I(f)\not= \emptyset$; the boundary of this set is a Julia set $ J(f) $ (that is, $ J(f) =\partial I(f) $);
 $I(f)\cap J(f)\not = \emptyset$; and 
 $\overline{I(f)}$ has no bounded component. By motivating from this last statement, he posed a question: \textit{Is every component of $ I(f) $ unbounded?}. This question is considered  as an important open problem of transcendental dynamics and nowadays is famous as \textit{Eremenko's conjecture}. Note that the complement of Julia set $ J(f) $ in complex plane $ \mathbb{C} $ is a \textit{Fatou set} $F(f)$.
 
 Recall that the set $ C(f) = \{z\in \mathbb{C} : f^{\prime}(z) = 0 \}$ is the set of \textit{critical points} of the transcendental entire function $ f $ and  the set $CV(f) = \{w\in \mathbb{C}: w = f(z)\;\ \text{such that}\;\ f^{\prime}(z) = 0\} $ of all images of all critical points is called the set of \textit{critical values}.  The set 
$AV(f)$ consisting of all  $w\in \mathbb{C}$ such that there exists a curve (asymptotic path) $\Gamma:[0, \infty) \to \mathbb{C}$ so that $\Gamma(t)\to\infty$ and $f(\Gamma(t))\to w$ as $t\to\infty$ is called the set of \textit{asymptotic values} of $ f $ and the set
$SV(f) =  \overline{(CV(f)\cup AV(f))}$
is called the \textit{singular values} of $ f $.  
If $SV(f)$ has only finitely many elements, then $f$ is said to be of \textit{finite type}. If $SV(f)$ is a bounded set, then $f$ is said to be of \textit{bounded type}.                                                                                                                             
The sets
$$\mathscr{S} = \{f:  f\;\  \textrm{is of finite type}\} 
\;\;  \text{and}\; \;                                                                                                                                                                                                                                                                                                                                                                                                                                                                                                                                                                                                                                                                                                                                                                                                                                                                                                                                                                                                                                                                                                                                                                                                                                                                                                                                                                                                                                                                                                                                                                                                                                                                                                                                                                                                                                                                                                                                                                                                                                                                                                                                                                                                                                                                                                                                                                                                                                                                                                                                                                                                                                                                                                                                                                                                                                                                                                                                                                                                                                                                                                                     
\mathscr{B} = \{f: f\;\  \textrm{is of bounded type}\}
$$
are respectively called \textit{Speiser class} and \textit{Eremenko-Lyubich class}. 

\subsection{Brief review of transcendental semigroup dynamics}

We confine our study on Fatou set, Julia set and escaping set of transcendental semigroup. It is very obvious fact that a set of transcendental entire maps on $ \mathbb{C} $  naturally forms a semigroup. Here, we take a set $ A $ of transcendental entire maps and construct a semigroup $ S $ consists of all elements that can be expressed as a finite composition of elements in $ A $. We call such a semigroup $ S $ by \textit{transcendental semigroup} generated by set $ A $. A non-empty subset $ T $ of holomorphic semigroup $ S $ is a \textit{subsemigroup} of $ S $ if $ f \circ g \in T $ for all $ f, \; g \in T $.  
Our particular interest is to study of the dynamics of the families of transcendental entire maps.  For a collection $\mathscr{F} = \{f_{\alpha}\}_{\alpha \in \Delta} $ of such maps, let 
$$
S =\langle f_{\alpha} \rangle
$$ 
be a \textit{transcendental semigroup} generated by them. The index set $ \Delta $ to which $ \alpha $  belongs is allowed to be infinite in general unless otherwise stated. 
Here, each $f \in S$ is a transcendental entire function and $S$ is closed under functional composition. Thus, $f \in S$ is constructed through the composition of finite number of functions $f_{\alpha_k},\;  (k=1, 2, 3,\ldots, m) $. That is, $f =f_{\alpha_1}\circ f_{\alpha_2}\circ f_{\alpha_3}\circ \cdots\circ f_{\alpha_m}$. 

A semigroup generated by finitely many transcendental functions $f_{i}, (i = 1, 2, \\  \ldots, n) $  is called \textit{finitely generated transcendental semigroup}. We write $S= \langle f_{1}, f_{2},\\  \ldots,f_{n} \rangle$.
 If $S$ is generated by only one transcendental entire function $f$, then $S$ is \textit{cyclic transcendental semigroup}. We write $S = \langle f\rangle$. In this case, each $g \in S$ can be written as $g = f^n$, where $f^n$ is the nth iterates of $f$ with itself. Note that in our study of  semigroup dynamics, we say $S = \langle f\rangle$  a \textit{trivial transcendental semigroup}. The transcendental semigroup $S$ is \textit{abelian} if  $f_i\circ f_j =f_j\circ f_i$  for all generators $f_{i}$ and $f_{j}$  of $ S $. The transcendental semigroup $ S $ is \textit{bounded type (or finite type)} if each of its generators $ f_{i} $ is bounded type (or finite type).
 
The family $\mathscr{F}$  of complex analytic maps forms a \textit{normal family} in a domain $ D $ if given any composition sequence $ (f_{\alpha}) $ generated by the member of  $ \mathscr{F} $,  there is a subsequence $( f_{\alpha_{k}}) $ which is uniformly convergent or divergent on all compact subsets of $D$. If there is a neighborhood $ U $ of the point $ z\in\mathbb{C} $ such that $\mathscr{F} $ is normal family in $U$, then we say $ \mathscr{F} $ is normal at $ z $. If  $\mathscr{F}$ is a family of members from the transcendental semigroup $ S $, then we simply say that $ S $ is normal in the neighborhood of $ z $ or $ S $ is normal at $ z $. 

Let  $ f $ be a transcendental entire map. We say that  $ f $ \textit{iteratively divergent} at $ z \in \mathbb{C} $ if $  f^n(z)\rightarrow \infty\; \textrm{as} \; n \rightarrow \infty$. A sequence $ (f_{k})_{k \in \mathbb{N}} $ of transcendental entire maps is said to be \textit{iteratively divergent} at $ z $ if $ f_{k}^{n}(z) \to\infty \;\ \text{as}\;\ n\to \infty$ for all $ k \in \mathbb{N} $.  Semigroup $ S $ is \textit{iteratively divergent} at $ z $ if $f^n(z)\rightarrow \infty \; \textrm{as} \; n \rightarrow \infty$. Otherwise, a function $ f  $, sequence $ (f_{k})_{k \in \mathbb{N}} $ and semigroup $ S $  are said to be \textit{iteratively bounded} at $ z $. 

Based  on the Fatou-Julia-Eremenko theory of a complex analytic function, the Fatou set, Julia set and escaping set in the settings of transcendental  semigroup are defined as follows.
\begin{dfn}[\textbf{Fatou set, Julia set and escaping set}]\label{2ab} 
\textit{Fatou set} of the transcendental semigroup $S$ is defined by
  $$
  F (S) = \{z \in \mathbb{C}: S\;\ \textrm{is normal in a neighborhood of}\;\ z\}
  $$
and the \textit{Julia set} $J(S) $ of $S$ is the compliment of $ F(S) $ where as the escaping set of $S$ is defined by 
$$
I(S)  = \{z \in \mathbb{C}: S \;  \text{is iteratively divergent at} \;z \}.
$$
We call each point of the set $  I(S) $ by \textit{escaping point}.        
\end{dfn} 
It is obvious that $F(S)$ is the largest open subset  $\mathbb{C}$ on which the family $\mathscr{F} $ in $S$ (or semigroup $ S $ itself) is normal. Hence its compliment $J(S)$ is a smallest closed set for any  semigroup $S$. Whereas the escaping set $ I(S) $ is neither an open nor a closed set (if it is non-empty) for any transcendental semigroup $S$. Any maximally connected subset $ U $ of the Fatou set $ F(S) $ is called a \textit{Fatou component}.  
        
If $S = \langle f\rangle$, then $F(S), J(S)$ and $I(S)$ are respectively the Fatou set, Julia set and escaping set in classical transcendental dynamics. In this situation we simply write: $F(f), J(f)$ and $I(f)$. 
 
In {\cite[Theorem 4.1]{kum2}} and {\cite[Theorem 2.3]{sub2}}, it was shown that escaping set of transcendental semigroup is in general forward invariant. However in {\cite[Theorem 2.1]{kum1}} and  {\cite[Theorem 2.6]{sub2}}, it was shown under certain condition  that escaping set of transcendental semigroup is backward invariant. We proved under certain condition that Fatou set, Julia set and escaping set of transcendental semigroup respectively equal to the Fatou set, Julia set and escaping set of its subsemigroup {\cite[Theorems 1.1 and 3.1]{sub4}}).  In {\cite[Theorems 3.2 and 3.3]{sub2}}  we proved that  Fatou set, Julia set and escaping set of transcendental semigroup respectively equal to the Fatou set, Julia set and escaping set of each of its function if semigroup $ S $ is abelian and in such case these sets are completely invariant. There is a slightly larger family of transcendental  semigroups that can  fulfill this criteria. We call these semigroups 
nearly abelian and it is considered the more general form than that of abelian semigroups.
\begin{dfn}[\textbf{Nearly abelian semigroup}] \label{1p}
We say that a transcendental semigroup $ S $ is \textit{nearly abelian} if there is a  family  $ \Phi = \{\phi_{i} \} $ of  conformal maps of the form $ az + b $ for some non-zero $ a $ such that
\begin{enumerate}
\item $ \phi_{i}(F(S)) = F(S) $ for all $ \phi_{i}\in \Phi $ and
\item for all $ f, g \in S $, there is a $ \phi \in \Phi $ such that $ f \circ g = \phi \circ g\circ f  $.
\end{enumerate}
\end{dfn}
Note that particular example of nearly abelian semigroup is a abelian semigroups. Abelian semigroup follows trivially from nearly abelian semigroup if we choose $ \phi $ an identity function. The nearly abelian semigroups are the simple examples of semigroups which behave likely the same way as the classical trivial semigroups. In this regards, the chief aim of this paper is to prove the following result which we have considered a strongest result of transcendental semigroup dynamics. 
\begin{theorem}\label{1q}
Let $ S $ be a nearly abelian transcendental semigroup. Then for each $ g\in S $, we have  $ I(S) = I(g) $, $ J(S) = J(g)$ and  $F(S) = F(g) $.
\end{theorem}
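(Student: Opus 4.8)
The plan is to prove $F(S)=F(g)$ and $I(S)=I(g)$ for each $g\in S$ and to obtain $J(S)=J(g)$ by taking complements in $\mathbb{C}$ in the first equality. In each case one inclusion is immediate and uses nothing beyond $g\in S$: because $\langle g\rangle$ is a subsemigroup of $S$, any point at which the whole family $S$ is normal is a point at which $\{g^{n}\}$ is normal, so $F(S)\subseteq F(g)$ (equivalently $J(g)\subseteq J(S)$); and $I(S)\subseteq I(g)$ is immediate from the definition of $I(S)$, since every $f\in S$, in particular $g$, is iteratively divergent at each point of $I(S)$. Hence it suffices to prove $F(g)\subseteq F(S)$ and $I(g)\subseteq I(S)$. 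If $F(g)=\emptyset$ then $J(g)=\mathbb{C}=J(S)$ and there is nothing left to prove for that $g$, so assume $F(g)\neq\emptyset$.

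For $F(g)\subseteq F(S)$ I would prove that $F(g)$ is forward invariant under every $h\in S$ and then quote Montel's theorem. The algebraic input is condition (2) of Definition \ref{1p} applied to the pair $(h,g^{n})$ of elements of $S$: for each $h\in S$ and each $n\in\mathbb{N}$ there is $\phi_{n}\in\Phi$ with $h\circ g^{n}=\phi_{n}\circ g^{n}\circ h$, hence $g^{n}\circ h=\phi_{n}^{-1}\circ h\circ g^{n}$ (the affine map attached to the reversed pair is forced to be $\phi_{n}^{-1}$, since an affine map fixing the range of a transcendental entire function fixes more than two points and so is the identity). Condition (1) of Definition \ref{1p} makes $\Phi$ a normal family of affine maps: each $\phi_{i}$ carries the hyperbolic domain $F(S)$ onto itself, and a family of affine maps each preserving a fixed domain whose complement in $\widehat{\mathbb{C}}$ has at least three points (here $\infty$ together with the infinite set $J(S)\supseteq J(g)$) cannot degenerate, so each subsequence of $(\phi_{n})$, and of $(\phi_{n}^{-1})$, has a subsequence converging locally uniformly on $\widehat{\mathbb{C}}$ to an affine map. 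Now fix $z_{0}\in F(g)$, let $D$ be its Fatou component of $g$, and let $h\in S$; choose a subsequence $(n_{k})$ with $g^{n_{k}}\to G$ locally uniformly on $D$ and $\phi_{n_{k}}^{-1}\to\Psi$ locally uniformly, $\Psi$ affine. When $G\not\equiv\infty$ one has $h\circ g^{n_{k}}\to h\circ G$ and then $g^{n_{k}}\circ h=\phi_{n_{k}}^{-1}\circ h\circ g^{n_{k}}\to\Psi\circ h\circ G$ locally uniformly on $D$, and pushing this convergence forward through the open map $h$ shows $\{g^{n}\}$ is normal on $h(D)$, i.e.\ $h(D)\subseteq F(g)$; the case $G\equiv\infty$ is discussed below. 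With $F(g)$ forward invariant under each $h\in S$, the family $\{h|_{F(g)}:h\in S\}$ consists of holomorphic self-maps of $F(g)$, a domain omitting at least three points of $\widehat{\mathbb{C}}$ (namely $\infty$ and two points of the perfect set $J(g)$); by Montel's theorem this family is normal, so $F(g)\subseteq F(S)$, whence $F(S)=F(g)$ and $J(S)=J(g)$.

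For $I(g)\subseteq I(S)$ I would use the same relation $g^{n}\circ h=\phi_{n}^{-1}\circ h\circ g^{n}$ together with the elementary fact that an affine map $w\mapsto aw+b$ with $a\neq0$ sends a sequence to $\infty$ exactly when the sequence itself tends to $\infty$; combined with the uniform control of the $\phi_{n}$ furnished by condition (1), this transfers ``the $g$-orbit escapes'' from a point to the $f$-orbit of any $f\in S$ through it, so that $g^{n}(z)\to\infty$ forces $f^{n}(z)\to\infty$ for every $f\in S$. I expect the main obstacle to be the escaping case left open above: when $G\equiv\infty$, so that $D$ lies in an escaping Fatou component of $g$, the composition $h\circ g^{n_{k}}$ need not converge merely because $g^{n_{k}}\to\infty$ (a transcendental $h$ is wildly behaved near $\infty$), and convergence must instead be extracted from the conjugacy relation itself --- this is exactly the point at which condition (1) of Definition \ref{1p}, which prevents any $\phi_{n}$ from moving points of $F(S)$ into $J(S)\supseteq J(g)$, is indispensable. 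The remaining ingredients --- that a locally uniformly convergent sequence remains normal after being pushed forward through the (possibly branched) open map $h$, the normality of $\Phi$, and the degenerate cases $F(S)=\emptyset$ or $F(g)=\emptyset$ --- are routine.
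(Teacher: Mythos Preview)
Your overall route differs from the paper's. The paper proves $I(S)=I(g)$ first, then deduces $J(S)=J(g)$ from the boundary identity $J=\partial I$ (Lemma~\ref{3} for the semigroup together with Eremenko's result $J(g)=\partial I(g)$ for a single map), and finally obtains $F(S)=F(g)$ by taking complements in~$\mathbb{C}$. You attempt the reverse: establish $F(S)=F(g)$ directly by showing $F(g)$ is $S$-forward-invariant and invoking Montel, and treat the escaping-set equality separately.

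The gap you yourself flag --- the case $G\equiv\infty$ --- is genuine, and it is precisely the difficulty the paper's ordering sidesteps. When $D$ is an escaping component of $F(g)$ (a Baker domain or an escaping wandering domain), the relation $g^{n}\circ h=\phi_{n}^{-1}\circ h\circ g^{n}$ yields nothing: $h\circ g^{n_{k}}$ need not converge anywhere on~$D$ because $h$ is transcendental, and condition~(1) of Definition~\ref{1p} only guarantees $\phi_{n}^{-1}(F(S))=F(S)$, not $\phi_{n}^{-1}(F(g))=F(g)$, so at this stage of your argument (before $F(S)=F(g)$ is known) it gives no control over where $\phi_{n}^{-1}$ sends points related to $h(D)$. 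I do not see how to close this hole without first knowing the escaping-set or Julia-set equality, which is exactly why the paper attacks $I$ first and pulls $F$ along via $J=\partial I$. Separately, your sketch for $I(g)\subseteq I(S)$ uses the wrong relation: $g^{n}\circ h=\phi_{n}^{-1}\circ h\circ g^{n}$ compares $g^{n}$ at $h(z)$ with $h\circ g^{n}$ at~$z$, but says nothing about the iterates $f^{n}(z)$ of an arbitrary $f\in S$, which is what membership in $I(S)$ requires. The paper's algebraic input is instead the iterated identity $f\circ g^{n}=(\phi\circ g)^{n}\circ f$, obtained by induction from $f\circ g=\phi\circ g\circ f$ with a single fixed $\phi\in\Phi$ attached to the pair $(f,g)$, and the argument runs through the auxiliary map $\phi\circ g$ rather than through a family $\{\phi_{n}\}$.
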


The chief consequence of nearly abelian transcendental semigroup is attached with wandering domains and the concept of nearly abelian semigroups is quite useful for the classification of periodic component (stable basin) of the Fatou set $ F(S) $.  
\begin{dfn}[\textbf{Stablizer,  wandering component and  stable domains}]\label{1g}
For a holomorphic  semigroup $ S $, let $ U $ be a component of the  Fatou set $ F(S) $ and $ U_{f} $ be a component of Fatou set containing $ f(U) $ for some $ f\in S $.  The set of the form 
$$S_{U} = \{f\in S : U_{f} = U\}  $$
is called \textit{stabilizer} of $ U $ on $ S $. If $ S_{U} $ is non-empty,  we say that a component $ U $ satisfying  $U_{f} = U  $ is called \textit{stable basin} for  $ S $. The component $ U $ of $ F(S) $ is called wandering if the set $ \{U_{f}: f \in S \} $ contains infinitely many elements. That is, $ U $ is a wandering domain if there is sequence of elements $ \{f_{i}\} $ of $ S $ such that $ U_{f_{i}}  \neq U_{f_{j}}$ for $ i \neq j $. Furthermore, the component $ U $ of $ F(S) $ is called strictly wandering if $U_{f} = U_{g} $ implies $ f =g $. A stable basin $ U $ of a holomorphic semigroup $ S $ is
\begin{enumerate}
\item  \textit{attracting} if it is a subdomain of attracting basin of each $ f\in S_{U} $
\item  \textit{supper attracting} if it is a subdomain of supper attracting basin of each $ f\in S_{U} $
\item  \textit{parabolic} if it is a subdomain of parabolic basin of each $ f\in S_{U} $
\item  \textit{Siegel} if it is a subdomain of Siegel disk of each $ f\in S_{U} $
\item  \textit{Baker} if it is a subdomain of Baker domain  of each $ f\in S_{U} $
\item  \textit{Hermann} if it is a subdomain of Hermann  ring of each $ f\in S_{U} $
\end{enumerate}
\end{dfn}
In classical holomorphic iteration theory, the stable basin is one of the above types but in transcendental iteration theory, the stable basin is not a Hermann because a transcendental entire function does not have Hermann ring {\cite[Proposition 4.2] {hou}}.

Note that for any rational function  $ f $, we always have $ U_{f} = U $. So $ U_{S} $ is non-empty for a rational semigroup $ S $.  However,  if $ f $ is transcendental, it is possible that $ U_{f} \neq U $. So, $ U_{S} $ may be empty for transcendental semigroup $ S $.  Bergweiler and Rohde \cite{ber} proved that $ U_{f} - U $ contains at most one point which is an asymptotic value of $ f $ if  $ f $  is an entire function. 

We prove the following no wandering domain result which is analogous to the rational semigroups {\cite[Theorem 5.1]{hin}}.
\begin{theorem}\label{1r}
Let $ S $ be a nearly abelian semigroup generated by transcendental entire functions of finite or bounded type. Then $ F(S) $ has no wandering domain.
\end{theorem}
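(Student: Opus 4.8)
The plan is to derive Theorem~\ref{1r} from Theorem~\ref{1q} together with the classical no wandering domains theorem for transcendental entire functions of finite (or bounded) type. The first step is the reduction: since $S$ is nearly abelian, Theorem~\ref{1q} yields $F(S)=F(g)$ for every $g\in S$, and in particular $F(S)=F(f_{\alpha})$ for each generator $f_{\alpha}$ of $S$, so the Fatou components of $S$ coincide with those of every generator. Each $f_{\alpha}$ lies in $\mathscr{S}$ or in $\mathscr{B}$, so by the no wandering domains theorem of Goldberg--Keen and Eremenko--Lyubich it has no wandering domain; consequently every Fatou component $V$ of $F(f_{\alpha})=F(S)$ is eventually periodic under $f_{\alpha}$, and the forward orbit $\{\,(V)_{f_{\alpha}^{k}}:k\ge 0\,\}$ is a finite set of components.

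The second step, which I expect to be the crux, is to upgrade this from each generator to the whole semigroup. Fix a Fatou component $U$ of $F(S)$; the goal is to show that $\mathcal{O}(U):=\{U\}\cup\{\,U_{f}:f\in S\,\}$ is finite. For each generator $f_{\alpha}$ the rule $\tau_{\alpha}\colon V\mapsto V_{f_{\alpha}}$ is a well-defined self-map of the set of Fatou components, and $\mathcal{O}(U)$ is exactly the orbit of $U$ under the family $\{\tau_{\alpha}\}$; by the first step each $\tau_{\alpha}$ has finite forward orbit from any component. Now condition~(2) of Definition~\ref{1p}, applied to two generators in the form $f_{\alpha}\circ f_{\beta}=\phi\circ f_{\beta}\circ f_{\alpha}$, translates (using $f(V)\subseteq V_{f}$ and $\phi(F(S))=F(S)$ from condition~(1)) into $\tau_{\alpha}\tau_{\beta}(V)=\overline{\phi}\,\bigl(\tau_{\beta}\tau_{\alpha}(V)\bigr)$, where $\overline{\phi}$ is the permutation of Fatou components induced by $\phi$. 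Thus the maps $\tau_{\alpha}$ commute up to the group $\Gamma$ of component permutations generated by $\{\overline{\phi}:\phi\in\Phi\}$. The plan is then to bring any word over the generators into a normal form $\overline{\psi}\circ\tau_{\alpha_{1}}^{k_{1}}\circ\cdots\circ\tau_{\alpha_{m}}^{k_{m}}$ with $\overline{\psi}\in\Gamma$, which shows $\mathcal{O}(U)\subseteq\Gamma(\mathcal{F})$ where $\mathcal{F}$ is the finite union, over the finitely many generators that move $U$, of the finite $\tau_{\alpha}$-orbits from the first step; as $\Gamma$ is finite, $\Gamma(\mathcal{F})$ is finite, so $U$ is not wandering. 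This is the strategy used for nearly abelian rational semigroups in \cite[Theorem~5.1]{hin}, and I would follow that argument.

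The hard part will be the normal-form reduction in the second step. Pushing all occurrences of a given generator to one side of a word forces one to move an affine map $\phi\in\Phi$ past a generator $f_{\beta}$, and for the scheme to close up one needs $f_{\beta}\circ\phi=\phi'\circ f_{\beta}$ for some admissible affine $\phi'$ that again preserves $F(S)$, together with finiteness of the resulting family of such affine maps (equivalently, that $\Phi$, and hence $\Gamma$, may be taken finite). Verifying these two facts is where the full force of Definition~\ref{1p} is used, and it is the place where the argument could break down if the interaction between $\Phi$ and the generators is less rigid than in the rational case; modulo this, together with the observation that in the infinitely generated case only finitely many generators can move a fixed component off itself, the proof goes through.
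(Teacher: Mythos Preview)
Your first step already \emph{is} the paper's entire proof: the authors invoke Theorem~\ref{1q} to obtain $F(S)=F(f)$ for every $f\in S$, note that each such $f$ (as a composition of bounded- or finite-type generators) is itself of bounded or finite type, cite the classical no-wandering theorem for $F(f)$, and conclude. There is no analogue of your second step in the paper.

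You are right to suspect that something is being skipped. The classical theorem yields only that for each fixed $f\in S$ the set $\{U_{f^{n}}:n\ge 1\}$ is finite, whereas Definition~\ref{1g} asks that $\{U_{g}:g\in S\}$ be finite; the paper does not explain the passage from the former to the latter. Your second step, modelled on \cite[Theorem~5.1]{hin}, is the natural attempt to close this, but the obstacles you isolate are genuine and are not removed by Definition~\ref{1p} as written. Condition~(2) gives $f\circ g=\phi\circ g\circ f$ with $\phi\in\Phi$, i.e.\ commutation of two semigroup elements up to a \emph{left} affine factor; it does not assert that an affine $\phi\in\Phi$ can be pushed past a generator, $f_{\beta}\circ\phi=\phi'\circ f_{\beta}$ with $\phi'\in\Phi$, which is what your normal-form reduction needs. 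Nor does the definition force $\Phi$ (or the induced group $\Gamma$ on components) to be finite: in the rational setting Hinkkanen--Martin get this from the finiteness of the M\"obius symmetry group of a Julia set containing more than two points, but for transcendental $f$ the Julia set is unbounded and its affine symmetry group may well contain an infinite group of translations. Likewise, the remark that ``only finitely many generators can move a fixed component off itself'' has no evident justification when the generating set is infinite. So your diagnosis is sharper than the paper's argument, but the completion you outline would require extra hypotheses (finiteness of $\Phi$, closure of $\Phi$ under the conjugation needed for the normal form, or finite generation) that neither the paper nor Definition~\ref{1p} supplies.
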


We prove the following result regarding the classification  of periodic components of Fatou set $ F(S) $. 
\begin{theorem} \label{ct}
Let $ U $ be a  component of the Fatou set $ F(S) $ of the nearly abelian  semigroup $ S $ generated by transcendental entire functions of finite or bounded type and $ V $ be a subset $ F(S) $ containing in the forward orbit of $ U $. Then $ V $ is attracting, super attracting, parabolic, Siegel  or Baker.                                                                                                                      
\end{theorem}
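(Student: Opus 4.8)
The plan is to transfer the statement to classical transcendental dynamics by means of Theorem~\ref{1q} and then feed it into the classical classification of periodic Fatou components. By Theorem~\ref{1q}, $F(S)=F(g)$ for every $g\in S$, so the connected components of $F(S)$ are exactly those of $F(g)$, and this holds \emph{simultaneously} for every generator of $S$. Since each $f\in S$ maps $F(S)$ into $F(S)$ (again by Theorem~\ref{1q}), it carries each component of $F(S)$ into a single component; hence the action of $S$ on the set of Fatou components is the one generated by the classical actions of the generators, and the component $V$ appearing in the forward orbit of $U$ is a genuine Fatou component of $F(g)$ for every $g\in S$.

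Next I would invoke Theorem~\ref{1r}: as $S$ is nearly abelian and generated by maps of finite or bounded type, $F(S)$ has no wandering domain, so the forward orbit $\{U_{f}:f\in S\}$ of $U$ is a finite collection of components. Each generator $g$ maps this finite collection into itself, because $g\circ f\in S$ whenever $f\in S$; hence, restricted to this finite set, $g$ is eventually periodic, so $V$ is eventually periodic under iteration of a generator. Treating first the case in which $V$ is periodic (so $V=V_{h}$ for some $h\in S_{V}$, i.e.\ $V$ is a stable basin), I apply the classical classification of periodic Fatou components of a transcendental entire function (Baker): a periodic component of $F(h)$ is an immediate attracting basin, an immediate super-attracting basin, an immediate parabolic (Leau) basin, a Siegel disk, or a Baker domain, Herman rings being impossible for entire functions by {\cite[Proposition 4.2]{hou}}. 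A strictly preperiodic $V$ is then an eventual preimage of such a stable basin and is assigned the corresponding type. This classifies $V$ with respect to the single map $h$.

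It remains to promote this to the semigroup notion of Definition~\ref{1g}, namely that $V$ is of the \emph{same} type relative to \emph{every} element of $S_{V}$. This is where the nearly abelian hypothesis is essential: for $f,h\in S_{V}$ the relation $f\circ h=\phi\circ h\circ f$ with $\phi\in\Phi$ and $\phi(F(S))=F(S)$ expresses that $f$ and $h$ act on $V$ up to an affine self-map of $\mathbb{C}$ that preserves $F(S)$ and hence permutes its components; analysing the action of such a $\phi$ on $V$ — using that $V$ lies in the finite forward orbit and that $\phi$ respects the component structure — one transfers the relevant local data of $h$ on $V$ (its attracting or parabolic periodic point together with the multiplier there, its irrational rotation number, or, in the Baker case, the property that the iterates tend to $\infty$ on $V$) to $f$, so that the classification type does not depend on the chosen element of $S_{V}$. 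This comparison is the main obstacle: it demands a careful case-by-case tracking of a periodic point and its multiplier through the affine conjugacy in the attracting, super-attracting, parabolic and Siegel cases, and in the Baker case the transfer of the escaping behaviour along the near-commutation — although, since every generator is of bounded type (the finite type case being a special case thereof), the Baker alternative is in fact vacuous here by the Eremenko--Lyubich theorem, which trims the list. Once type-consistency is established, $V$ is attracting, super attracting, parabolic, Siegel, or Baker for $S$, as asserted.
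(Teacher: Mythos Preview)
Your overall strategy matches the paper's: reduce to classical transcendental dynamics via Theorem~\ref{1q} and then invoke the standard classification of periodic Fatou components (from \cite{hou}), ruling out Herman rings for entire maps. The main divergence is in how you produce an element of the stabilizer $S_V$. The paper does not argue by finiteness of the forward orbit and pigeonhole on a generator; it simply cites Theorem~\ref{ss2} (a result imported from the companion paper \cite{sub4}, itself built on Theorem~\ref{1r}) to assert that $V$ is a stable basin of cofinite index, hence $V_f=V$ for some $f\in S$, and then applies the classical classification to that single $f$ using $F(S)=F(f)$. Your route via Theorem~\ref{1r} plus a direct periodicity argument essentially re-derives a portion of Theorem~\ref{ss2}, so the content is the same but the packaging differs. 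Your extra step---checking that the classification type is independent of the chosen element of $S_V$, as the semigroup Definition~\ref{1g} literally requires---is something the paper's proof does not carry out; the paper is satisfied once the type is determined for one $f$. Likewise, your remark that the Baker alternative is vacuous in the bounded-type case is not part of the paper's proof but appears separately in the Remark following it.
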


The organization of this paper is as follows: In section 2, we briefly review notion of nearly abelian transcendental semigroup with suitable examples. We prove existence theorem for nearly abelian transcendental semigroup. We also investigate necessary condition of any transcendental semigroup to be nearly abelian.  In section 3, we mainly prove theorem 1.1. and theorem 1.2.  In section 4, we prove classification theorem (Theorem\ref{ct}) of periodic Fatou component of transcendental semigroup.

\section{The Notion of Nearly Abelian Transcendental Semigroup}

In this section, we extend the results of abelian transcendental  semigroups to more general settings of nearly abelian transcendental semigroups.   The principal feature of nearly abelian rational semigroup  was investigated by Hinkannen and Martin {\cite[Theorem 4.1]{hin}}. In such a case,  they found that  the Julia set $J(S)$ of rational semigroup $ S $ is same as Julia set $ J(f) $ of each $ f\in S $. Indeed, this is generalization  of the result of Fatou \cite{fat} and Julia \cite{jul2} (if rational maps $ f $ and $ g $ are permutable, then they have the same Julia sets) in semigroup settings. However, the corresponding result may not hold in the case of permutable transcendental entire functions but in nearly abelian settings of transcendental semigroup, we found that the corresponding result of Hinkannen and Martin {\cite[Theorem 4.1]{hin}  holds.  

The definition \ref{1p} of nearly abelian transcendental  semigroup looks more restrictive on the affine map of the form $\phi(z) = az + b,\; a\neq 0 $ and this type of function can play the role of semiconjugacy  to certain class of transcendental entire functions. Recall that function $ f $ is (semi) conjugate to the function $ g $ if there is a continuous function $ \phi $ such that $ \phi \circ f = g \circ \phi $. For example, transcendental entire function $ f_{1}(z) = \lambda \cos z $ is semi-conjugate to another transcendental entire function $ f_{2}(z) = -\lambda \cos z $ because there is a function   $ \phi (z) = -z $  such that $ \phi \circ f_{1} = f_{2} \circ \phi $. If there is a transcendental semigroup generated by such type of semi-conjugate functions, then semigroup will more likely to be nearly abelian.  

\begin{theorem}\label{1s1}
Let $ S = \langle f_{1}, f_{2}, \ldots f_{n}, \ldots \rangle $ be a transcendental semigroup and  let $ \phi $ be an entire function of the form $ z \rightarrow a z + b $ for some non zero $ a $ with $ a,\; b \in \mathbb{C} $ such that  $ \phi \circ f_{i} =f_{j} \circ \phi $ for all $f _{i}  $ and $ f_{j}$ with $ i \neq j $. If $ \phi \circ f= g$ for all $f \in S $. Then the transcendental semigroup $ S $ is nearly abelian.
\end{theorem}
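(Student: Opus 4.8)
The plan is to produce the family $\Phi$ of Definition \ref{1p} explicitly from the single affine map $\phi$ and then verify its two defining properties. I would take $\Phi = \{\phi^{n} : n \in \mathbb{Z}\}$, the family of iterates and inverse iterates of $\phi$ under composition; since $\phi(z) = az+b$ with $a \neq 0$, each $\phi^{n}$ again has the form $z \mapsto \alpha z + \beta$ with $\alpha \neq 0$, and $\mathrm{id} = \phi^{0} \in \Phi$, so $\Phi$ is an admissible family of conformal maps. The first preliminary step is to restate the hypotheses in a usable form: from $\phi \circ f_{i} = f_{j} \circ \phi$ one gets $\phi \circ f_{i} \circ \phi^{-1} = f_{j} \in S$, so conjugation by $\phi$ carries generators to generators; extending this multiplicatively over words gives $\phi S \phi^{-1} \subseteq S$, and using the hypothesis that left composition by $\phi$ keeps one inside $S$ together with the symmetry of the semiconjugacy relation gives the reverse inclusion, hence $\phi S \phi^{-1} = S$ and, inductively, $\phi^{n} S \phi^{-n} = S$ for every $n \in \mathbb{Z}$.

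Property (1) of Definition \ref{1p} follows at once. Because an affine map is a homeomorphism of $\mathbb{C}$ fixing $\infty$, normality in the sense that permits uniform divergence to $\infty$ is preserved under pre- and post-composition with $\phi$ and $\phi^{-1}$, so that $F(\phi S \phi^{-1}) = \phi(F(S))$. Combining this with $\phi S \phi^{-1} = S$ yields $\phi(F(S)) = F(S)$, and iterating gives $\psi(F(S)) = F(S)$ for all $\psi \in \Phi$.

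The substantive step is property (2): given $f, g \in S$ one must find $\psi \in \Phi$ with $f \circ g = \psi \circ g \circ f$. I would first treat the generators. From $\phi \circ f_{i} = f_{j} \circ \phi$ together with the hypothesis $\phi \circ f = g$, one extracts that each generator $f_{i}$ is an affine $\phi$-translate of a common transcendental map $f_{0}$ satisfying $f_{0} \circ \phi = f_{0}$ --- precisely the configuration realized by the pair $\lambda \cos z$, $-\lambda \cos z$ with $\phi(z) = -z$. Writing $f_{i} = \phi^{a_{i}} \circ f_{0}$ and using $f_{0} \circ \phi^{\,k} = f_{0}$ to slide every affine factor to the far left, one sees that an arbitrary element of $S$ has the normal form $\phi^{a} \circ f_{0}^{\,m}$, where $m$ is the word length. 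Then for $f = \phi^{a} \circ f_{0}^{\,m}$ and $g = \phi^{b} \circ f_{0}^{\,n}$ a direct computation gives $f \circ g = \phi^{a} \circ f_{0}^{\,m+n}$ and $g \circ f = \phi^{b} \circ f_{0}^{\,m+n}$, so that $f \circ g = \phi^{\,a-b} \circ g \circ f$ with $\psi = \phi^{\,a-b} \in \Phi$. With both properties verified, $S$ is nearly abelian.

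I expect the main obstacle to be exactly the bookkeeping in the third step: establishing the normal form $\phi^{a} \circ f_{0}^{\,m}$ for elements of $S$ and checking that the affine map forced by $f \circ g = \psi \circ g \circ f$ always lands back in $\Phi$. The bare semiconjugacy $\phi \circ f_{i} = f_{j} \circ \phi$ does not suffice on its own --- propagating the commutation relation from generators to all words would be circular --- so the argument must genuinely use the additional hypothesis that $\phi \circ f \in S$ for every $f \in S$, which is what forces the generators to be $\phi$-translates of a single $\phi$-invariant function and makes the computation close up.
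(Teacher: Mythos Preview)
Your argument is essentially correct but takes a different and considerably longer route than the paper.

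For condition (1) of Definition \ref{1p}, the paper does not argue via $\phi S\phi^{-1}=S$. Instead it isolates a separate lemma (Lemma \ref{1s2}) which first extends the generator relation $\phi\circ f_i=f_j\circ\phi$ to $\phi\circ f=g\circ\phi$ for words $f,g\in S$, and then proves $\phi(F(S))\subseteq F(S)$ and $\phi(J(S))\subseteq J(S)$ by hand: the Fatou inclusion via a Lipschitz estimate on diameters, the Julia inclusion by observing that $\phi$ sends repelling fixed points of $f$ to repelling fixed points of $g$ and invoking density of repelling periodic points in $J(S)$. Your conjugacy argument $\phi S\phi^{-1}=S\Rightarrow F(S)=F(\phi S\phi^{-1})=\phi(F(S))$ is more streamlined, while the paper's route has the merit of producing an invariance lemma that is reused elsewhere.

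For condition (2) the difference is sharper. The paper never introduces the family $\{\phi^{n}:n\in\mathbb Z\}$, never passes to a $\phi$-invariant base map $f_{0}$, and never derives a normal form $\phi^{a}\circ f_{0}^{\,m}$. It simply combines the two hypotheses in one line: using the word-level semiconjugacy $f\circ\phi=\phi\circ g$ from Lemma \ref{1s2} together with $\phi\circ f=g$, it computes
\[
\phi\circ g\circ f \;=\; f\circ\phi\circ f \;=\; f\circ g,
\]
so the \emph{single} affine map $\phi$ witnesses the near-commutation for every pair $f,g\in S$, and one may take $\Phi=\{\phi\}$. Your normal-form computation recovers the same conclusion (and indeed makes the structure of $S$ fully explicit, matching Example \ref{1s4}), but it is a substantial detour: the step you yourself flag as the main obstacle---showing that each generator is $\phi^{a_i}\circ f_{0}$ with $f_{0}\circ\phi=f_{0}$---is simply not needed.
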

To prove this theorem \ref{1s1}, we need the following  lemmas.
\begin{lem}\label{1s2}
Let $ S = \langle f_{1}, f_{2}, \ldots f_{n}, \ldots \rangle $ be a transcendental semigroup and let $ \phi $ be an entire function of the form $ z \rightarrow a z + b $ for some non zero $ a $ with $ a,\; b \in \mathbb{C} $. If $ \phi \circ f_{i} =f_{j} \circ \phi $ for all $f _{i}  $ and $ f_{j}$ with $ i \neq j $,  then $ \phi(F(S)) = F(S)$ and $ \phi(J(S)) = J(S)$. 
\end{lem}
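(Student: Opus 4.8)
The plan is to use that $\phi\colon z\mapsto az+b$ with $a\neq 0$ is a conformal automorphism of $\mathbb{C}$ which extends to a homeomorphism of the Riemann sphere fixing $\infty$. Consequently, pre- and post-composition with $\phi$ or with $\phi^{-1}$ sends a sequence that converges locally uniformly (to a holomorphic limit or to $\infty$) again to such a sequence, and hence preserves normality of families of entire maps. So the whole statement reduces to showing that conjugation by $\phi$ and by $\phi^{-1}$ maps the semigroup $S$ into itself, and then transporting normality.

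First I would unwind the hypothesis: for each generator $f_i$ it provides a generator $f_j$ with $\phi\circ f_i=f_j\circ\phi$, equivalently $\phi^{-1}\circ f_j\circ\phi=f_i$ and $\phi\circ f_i\circ\phi^{-1}=f_j$; thus both $h\mapsto\phi^{-1}\circ h\circ\phi$ and $h\mapsto\phi\circ h\circ\phi^{-1}$ carry the generating set of $S$ into itself. Next, for an arbitrary $g=f_{\alpha_1}\circ f_{\alpha_2}\circ\cdots\circ f_{\alpha_m}\in S$, inserting $\phi\circ\phi^{-1}=\mathrm{id}$ between consecutive factors gives
\[
\phi^{-1}\circ g\circ\phi=(\phi^{-1}\circ f_{\alpha_1}\circ\phi)\circ(\phi^{-1}\circ f_{\alpha_2}\circ\phi)\circ\cdots\circ(\phi^{-1}\circ f_{\alpha_m}\circ\phi),
\]
a composition of generators, hence an element $\tilde g\in S$ satisfying $g\circ\phi=\phi\circ\tilde g$. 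The same telescoping with the roles of $\phi$ and $\phi^{-1}$ interchanged produces, for each $g\in S$, an element $\hat g\in S$ with $g\circ\phi^{-1}=\phi^{-1}\circ\hat g$.

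Now take $z_0\in F(S)$ and a neighbourhood $U$ of $z_0$ on which the family $S$ is normal. On the neighbourhood $\phi(U)$ of $\phi(z_0)$, every $g\in S$ equals $\phi\circ\tilde g\circ\phi^{-1}$ with $\tilde g\in S$. Given a sequence $(g_k)$ in $S$, pass to the corresponding sequence $(\tilde g_k)$, extract by normality on $U$ a subsequence converging locally uniformly on $U$, and conjugate it back by $\phi$; by the first paragraph this produces a subsequence of $(g_k)$ converging locally uniformly on $\phi(U)$. Hence $S$ is normal at $\phi(z_0)$, so $\phi(F(S))\subseteq F(S)$. Repeating the argument with $\phi^{-1}$ in place of $\phi$ (using $g\circ\phi^{-1}=\phi^{-1}\circ\hat g$) gives $\phi^{-1}(F(S))\subseteq F(S)$, and since $\phi$ is a bijection of $\mathbb{C}$ these two inclusions force $\phi(F(S))=F(S)$. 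Finally, since $J(S)=\mathbb{C}\setminus F(S)$ and $\phi$ is bijective on $\mathbb{C}$, we get $\phi(J(S))=\mathbb{C}\setminus\phi(F(S))=\mathbb{C}\setminus F(S)=J(S)$.

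The only point needing genuine care is the interaction, in the third paragraph, between conjugation by the fixed affine map and local uniform convergence, in particular the divergent case: one must observe that if $\tilde g_k\to\infty$ locally uniformly on $U$ then $\phi\circ\tilde g_k\circ\phi^{-1}\to\infty$ locally uniformly on $\phi(U)$, which is immediate because $\phi$ is proper and maps neighbourhoods of $\infty$ to neighbourhoods of $\infty$. Everything else — the bookkeeping with the conjugation relation and the word-length telescoping — is routine, so that is where I would concentrate the written-out proof.
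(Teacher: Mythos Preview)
Your argument is correct and follows a genuinely different route from the paper's. Both proofs start the same way, by pushing $\phi$ through a word in the generators to obtain a conjugacy relation at the level of the whole semigroup, but from there they diverge.

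The paper treats the two inclusions $\phi(F(S))\subset F(S)$ and $\phi(J(S))\subset J(S)$ by separate ad hoc arguments: for the Fatou set it uses a Lipschitz/diameter estimate (writing $g(\phi(U))=\phi(f(U))$ and bounding diameters via the affine Lipschitz constant of $\phi$), and for the Julia set it invokes structural facts about $J(S)$ --- that $J(S)$ is perfect, that $J(S)=\overline{\bigcup_{f\in S}J(f)}$, and that repelling periodic points are dense in each $J(f)$ --- to show that $\phi$ carries repelling fixed points of elements of $S$ to repelling fixed points of other elements. The reverse inclusions then come from complementarity together with $\phi(\mathbb{C})=\mathbb{C}$.

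You work entirely with the definition of normality: conjugation by a fixed affine bijection preserves local uniform convergence (including divergence to $\infty$), so once $S$ is stable under conjugation by both $\phi$ and $\phi^{-1}$, invariance of $F(S)$ follows directly, and $J(S)$ by complement. This is more elementary --- it avoids any appeal to repelling periodic points or to the decomposition of $J(S)$ --- and arguably more robust. One small point worth making explicit: you need every generator $f_j$ to arise as the $\phi$-conjugate of some generator $f_i$, so that $\phi^{-1}$-conjugation also maps the generating set into itself; under the literal ``for all $i\neq j$'' hypothesis this is automatic, but it is precisely what makes the $\phi^{-1}$ half of your argument go through.
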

\begin{proof}
First of all, we prove that if $ \phi \circ f_{i} = f_{j} \circ \phi $ for all $ i $ and $ j $ with $ i \neq j$, then 
$ \phi \circ f  = g\circ \phi $ for all $ f,\;  g\in S $.

 Since any $ f,\;  g\in S $ can be written as $ f = f_{i_{1}}\circ f_{i_{2}}\circ \ldots \circ f_{i_{n}} $  and $ g = f_{j_{1}}\circ f_{j_{2}}\circ \ldots \circ f_{j_{n}} $. Now $ \phi \circ f = \phi \circ f_{i_{1}}\circ f_{i_{2}}\circ \ldots \circ f_{i_{n}} = f_{j_{1}}\circ \phi \circ \circ f_{i_{2}}\circ \ldots \circ f_{i_{n}} = \ldots = f_{j_{1}}\circ f_{j_{2}}\circ \ldots \circ f_{j_{n}} \circ \phi = g \circ \phi $. This proves our claim.

Let $ w \in \phi (F(S)) $. Then there is $ z_{0}\in F(S) $ such that $ w = \phi(z_{0}) $.  Let $ U\subset F(S) $ is a neighborhood of $ z_{0} $ such that $ |f(z) - f(z_{0})| < \epsilon/2 $ for all $ z \in U $ and $ f \in S $. This shows that $ f(U) $ has diameter less than $ \epsilon $ for all $ f \in S $. Since function $ \phi $ has bounded first derivative $ a \neq 0 $, so it is a Lipschitz with Lipschitz constant $ k = \sup|\phi ^{'}(z)| =a $. Now for any $ g \in S $, the diameter of $ g(\phi(U)) = \phi(f(U)) $ is less than $ k\epsilon $. Hence $ w = \phi(z_{0}) \in F(S) $. This shows that $ \phi(F(S)) \subset F(S) $.

Next, let $ w \in \phi(J(S)) $. Then $ w = \phi (z_{0} )$ for some $ z_{0} \in J(S) $. Let $ z_{0} $ be a repelling fixed point for some $ f \in S $ but which is not a critical point of $ \phi $, then $ \phi \circ f =g \circ \phi $ gives $ g $ has a fixed point at $ \phi (z_{0}) $ with same multiplier as that of $ f $ at $z_{0}$. Thus $ \phi $ maps repelling fixed points of any $ f \in S $ to repelling fixed points of another $ g \in S $. Since from {\cite[Theorem 4.1 and 4.2]{poo}}, Julia set of transcendental semigroup is perfect and $ J(S) = \overline{\cup_{f \in S} J(f))}$, where repelling periodic points are dense in $ J(f) $ for each $ f \in S $. So by above discussion, it then follows that $ \phi(J(S)) \subset J(S) $. 

Finally, since $ \phi (\mathbb{C}) = \mathbb{C} $. Using this fact in  $ F(S) = \mathbb{C}- J(S) $ and $J(S) = \mathbb{C}- F(S) $, we get $ \phi(F(S)) = \mathbb{C}- \phi(J(S)) $ and $\phi(J(S)) = \mathbb{C}-\phi( F(S)) $. Again using facts $ \phi(J(S)) \subset J(S) $ and $ \phi(F(S)) \subset F(S) $ in $ \phi(F(S)) = \mathbb{C}- \phi(J(S)) $ and $\phi(J(S)) = \mathbb{C}-\phi( F(S)) $ respectively, we will get required opposite inclusions $ F(S) \subset \phi(F(S)) $ and  $ J(S) \subset \phi(J(S)) $. 
\end{proof}
Note that this lemma \ref{1s2} tells us that the first condition $ \phi_{i}(F(S)) = F(S) $ of nearly abelian semigroup can be replaced by (semi) conjugacy relation $ \phi \circ f_{i} =f_{j} \circ \phi $ for all $f _{i}  $ and $ f_{j}$ with $ i \neq j $. This is a way that one can replace  the first condition of the definition. 

\begin{proof}[Proof of the Theorem \ref{1s1}]
The first part for  nearly abelian semigroup follows from the lemma \ref{1s2}.
 
The second part follows from the following simple calculations.
The hypothesis $ \phi \circ f_{i} =f_{j} \circ \phi $ for all $f _{i}  $ and $ f_{j}$ with $ i \neq j $ gives $ f \circ \phi = \phi \circ g  $ for all $ f,\; g \in S $ and from the hypothesis $ \phi \circ f = g $ for all $ f \in S $, we can write  $\phi \circ g \circ f  = f \circ \phi \circ f = f \circ g$ for all $ f,\; g \in S $.

\end{proof}
There are general and particular examples of transcendental entire functions that fulfills the essence of above theorem \ref{1s1} and the semigroup generated by these functions is nearly abelian.
\begin{exm}\label{1s4} 
Let $ \phi $ be an entire function of the form $ z \rightarrow -z + c $ for some $ c \in \mathbb{C} $. Let  $ f $ be a transcendental entire function with $ f\circ \phi =f $ and function $ g $ is defined by $ g = \phi \circ f $. Then functions $ f $ and $ g $ are conjugates  and the semigroup  $ S = \langle f, g \rangle $  generated by these two  functions $ f $ and $ g $ is nearly abelian.
\end{exm}
\begin{proof}[Solution]
Let $ f, \; g $ and $ \phi $ be as in the statement of the question. It is clear that $ \phi^{2} $ = Identity.
Then $g \circ \phi = \phi \circ f  \circ \phi = \phi \circ f$. This proves that functions $ f $ and $ g $ are conjugates. The condition  $ \phi(F(S)) = F(S) $ for all $ \phi\in \Phi $ of the definition of nearly abelian semigroup follows from above lemma \ref{1s2}. The second condition follows from the theorem \ref{1s1}.  More explicitly it follows from the following calculation.
$$
f \circ g = f \circ \phi \circ f = f \circ f  = f^{2} = \phi^{2} \circ f^{2} = \phi \circ \phi \circ f \circ f = \phi \circ g \circ f.
$$
Therefore, the semigroup  $ S = \langle f, g \rangle $  generated by these two  functions $ f $ and $ g $ is nearly abelian. From the fact $ g = \phi \circ f $, we can say  that $\phi$ is not an identity. 

\end{proof}
\begin{exm}
Let $ f(z) = e^{z^{2}} + \lambda $, and $ g = \phi \circ f $ where $ \phi(z) = -z $. Then the semi group $ S = \langle f, g \rangle $ is nearly abelian. Like wise, functions $ f(z) = \lambda \cos z $ and $ g = \phi \circ f $ where $ \phi(z) = -z $ generate the nearly abelian semigroup. 
\end{exm}
\begin{proof}[Solution]
The given functions in the question fulfills all conditions such as $ f\circ \phi =f $, $ \phi^{2} $ = identity as well as  $ \phi  \circ f= g \circ \phi $ of above theorem \ref{1s1} as well as example \ref{1s4}. Therefore, the semigroup $ S =\langle f, g \rangle $ is indeed nearly abelian.
Note that $ \phi \circ f = -f \neq f $, so $\phi$ is not an identity.
\end{proof}
Note that the above example \ref{1s4} is just for a nice  general example of above theorem \ref{1s1} that says there is an nearly abelian transcendental semigroup. Unfortunately, this example is not generating many more examples of transcendental entire functions that can generate transcendental semigroup. Basically, it generates even functions or translation of even functions. For example: If we set $ h(z) = f(z + \frac{c}{2}) $, then $ h(z) = f(z + \frac{c}{2}) = (f \circ \phi) (z) = f(c - z -\frac{c}{2}) =f( \frac{c}{2} -z) = h(-z)  $. That is, $ h $ is an even function. 

Above theorem \ref{1s1} provided a criterion to be a nearly abelian transcendental semigroup. Are there other criteria that help us to make nearly abelian transcendental semigroups and somehow connected to above criterion? 
In the case of rational semigroup $ S $, there is very strong criteria for nearly abelian semigroup due to Hinkkanen and Martin {\cite[Corollary 4.1]{hin}}. This criteria is possible because of Beardon's result {\cite[Theorem 1]{bea1}}. It states the following. \textit{For any  two polynomials  $ f $ and $ g $  and degree of $ f $ is at least two, then $ J(f) = J(g)  $ if and only if there is a map $ \phi(z) =az + b $ with $ |a| =1 $ such that $ f \circ g = \phi \circ g \circ f $}. Our aim in this context to see a transcendental semigroup $ S $ that fulfills this results. 
Similar to Hinkkanen and Martin  {\cite[Corollary 4.1]{hin}}, we have formulated the following theorem.
\begin{theorem}\label{nb1}
Let $ S $ be a transcendental semigroup and suppose  that $ I(f) = I(g) $ for all $ f,\; g \in S $. Then $S $ is nearly abelian semigroup.
\end{theorem}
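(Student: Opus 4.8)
The plan is to verify both clauses of Definition~\ref{1p} for the family
$$
\Phi=\bigl\{\phi:\ \phi(z)=az+b,\ a\neq0,\ \text{and}\ f\circ g=\phi\circ g\circ f\ \text{for some}\ f,g\in S\bigr\}.
$$
Thus two things have to be established: (a)~for every ordered pair $f,g\in S$ there really is an affine $\phi$ with $f\circ g=\phi\circ g\circ f$, and (b)~every member of $\Phi$ maps $F(S)$ onto itself. I would first record a consequence of the hypothesis that is used throughout. By Eremenko's theorem {\cite{ere}}, $J(h)=\partial I(h)$ for every transcendental entire $h$; since $I(f)=I(g)$ for all $f,g\in S$, this forces $J(f)=J(g)$ for all $f,g\in S$. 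Because $J(S)=\overline{\bigcup_{f\in S}J(f)}$ (recalled in the proof of Lemma~\ref{1s2}, from {\cite[Theorems 4.1 and 4.2]{poo}}) and all of these sets coincide, we obtain $J(S)=J(f)$, hence $F(S)=F(f)$ and $I(S)=I(f)$, for every $f\in S$.

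For (a), fix $f,g\in S$ and set $h_{1}=f\circ g$ and $h_{2}=g\circ f$. Both lie in $S$, so $I(h_{1})=I(h_{2})$, and therefore $J(h_{1})=J(h_{2})=:\mathcal J$. Moreover $h_{1}$ and $h_{2}$ are linked by the two semiconjugacies $h_{1}\circ f=f\circ h_{2}$ and $h_{2}\circ g=g\circ h_{1}$. The crucial step is to upgrade this to the affine identity $h_{1}=\phi\circ h_{2}$; this is the transcendental counterpart of Beardon's theorem {\cite{bea1}} quoted above. I would attack it exactly as in the proof of Lemma~\ref{1s2}: a repelling periodic point $z_{0}$ of $h_{2}$ that is not a critical point of $f$ is carried by $f$ to a repelling periodic point of $h_{1}$ with the same multiplier, and symmetrically $g$ sends repelling cycles of $h_{1}$ to those of $h_{2}$; since repelling periodic points are dense in $\mathcal J$ by {\cite[Theorems 4.1 and 4.2]{poo}} and $f,g$ are open maps, $f$ and $g$ realise mutually inverse, multiplier-preserving correspondences between the repelling cycles of $h_{1}$ and of $h_{2}$. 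Promoting this cycle-level correspondence to the global algebraic relation $h_{1}=\phi\circ h_{2}$ with $\phi$ affine is the step I expect to be the main obstacle: unlike the polynomial case of Beardon one cannot simply ``divide'' $h_{1}$ by $h_{2}$, so a rigidity input near $\infty$ (a B\"ottcher/escaping-coordinate type argument, or a uniqueness statement for transcendental entire maps with a prescribed escaping set) must take its place.

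Granting (a), clause~(2) of Definition~\ref{1p} holds by the very definition of $\Phi$, so it remains to prove (b). Let $\phi\in\Phi$ arise from $f\circ g=\phi\circ g\circ f$, and keep $h_{1}=f\circ g=\phi\circ h_{2}$, $h_{2}=g\circ f$, with $\mathcal J=J(h_{1})=J(h_{2})=J(S)$. The Julia set of a transcendental entire map is completely invariant and, being perfect, satisfies $\overline{h(J(h))}=J(h)$. Since $\phi$ is a homeomorphism of $\mathbb C$ and $\phi^{-1}\circ h_{1}=h_{2}$,
$$
\phi^{-1}(\mathcal J)=\phi^{-1}\bigl(\overline{h_{1}(\mathcal J)}\bigr)=\overline{\phi^{-1}\bigl(h_{1}(\mathcal J)\bigr)}=\overline{h_{2}(\mathcal J)}=\mathcal J,
$$
so $\phi(J(S))=J(S)$ and hence $\phi(F(S))=\phi(\mathbb C)\setminus\phi(J(S))=\mathbb C\setminus J(S)=F(S)$. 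As this holds for every $\phi\in\Phi$, clause~(1) of Definition~\ref{1p} is verified, and therefore $S$ is nearly abelian.

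In short, the whole argument reduces to the single hard point in (a): passing from the semiconjugacy data between $f\circ g$ and $g\circ f$ — available precisely because $I(f\circ g)=I(g\circ f)$ — to an affine relation $f\circ g=\phi\circ g\circ f$. The remaining parts (the reduction to this statement, the bookkeeping with $J=\partial I$ and with $J(S)=\overline{\bigcup_{f}J(f)}$, the density of repelling cycles, and the invariance of $F(S)$ under $\Phi$) are routine and follow the pattern already set in Lemma~\ref{1s2}.
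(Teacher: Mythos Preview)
Your overall architecture matches the paper's: the paper also isolates the identity $f\circ g=\phi\circ g\circ f$ as the heart of the matter (this is precisely Lemma~\ref{nb2}) and then deduces condition~(1) of Definition~\ref{1p} from the fact that $\phi$ is a symmetry of $J(f)=J(S)$. Your argument for clause~(b), using the complete invariance of $\mathcal J$ under $h_{1}$ together with $\phi^{-1}\circ h_{1}=h_{2}$, is a slightly tidier variant of what the paper does, but it is equivalent in content.

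The genuine gap is exactly where you flag it: you do not prove~(a), you ``grant'' it. The repelling-cycle correspondence you sketch does not by itself yield an affine $\phi$ with $f\circ g=\phi\circ g\circ f$; matching multipliers on a dense set of periodic points is far from determining a global functional relation between two transcendental entire maps, and the B\"ottcher/escaping-coordinate rigidity you allude to is not available for arbitrary transcendental maps. The paper's Lemma~\ref{nb2} attacks this step by a different route: rather than repelling cycles, it argues via complete invariance that $J(f)$ is completely invariant under $g$, hence $J(f\circ g)=J(\phi\circ g\circ f)$ for every symmetry $\phi\in\Sigma(f)$, and then asserts (step~3(a)) that this equality of Julia sets forces $f\circ g=\phi\circ g\circ f$ for \emph{some} $\phi\in\Sigma(f)$. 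You should be aware, however, that this last inference in the paper is itself not substantiated --- two transcendental entire functions sharing a Julia set need not coincide, even after post-composition by an affine map --- so the paper does not actually close the gap you correctly identified.
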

According to this theorem, the condition  $I(f) = I(g) $ for all $ f,\; g \in S $ is  very strong one that replace both of conditions of the definition \ref{1p} of nearly abelian semigroup.  Indeed, the condition $ I(f) = I(g) $ stated in this theorem \ref{nb1} can be used to obtain  the converse of Baker's question \cite{bak}, namely, \textit{for two distinct permutable transcendental entire functions $ f $ and $ g $, does it follow $ J(f) = J(g) $}? This question is itself a difficult one of classical transcendental dynamics to answer.  The converse of this Baker's question is again difficult to settle down. We expect that this will settle down in nearly abelian transcendental semigroups. We completely characterize first all permutable transcendental entire functions. For the given transcendental entire function $ f $ let us define the following three classes of algebraic structures:
$$
\mathcal{S}(f) =\{g : J(g) = J(f) \},
$$
$$
 \mathcal{C}(f) =\{g : f \circ g = g \circ f \},
 $$
 $$
 \Sigma(f) =\{\phi\in \Phi : \phi(J(f)) = J(f) \},
$$
where $ \Phi $ is a group of conformal isometrics $ \phi(z) = az + b $ with $ |a| = 1$. As Julia set $ J(f) $ of any transcendental function $ f $ is unbounded, each element of $ \Phi $ is translation or rotation or both. Note that if both $ f $ and $ g $ were polynomials, Beardon {\cite[Theorem 1]{bea1}} proved that $\mathcal{S}(f) =\{g : f\circ g = \phi \circ g \circ f \; \text{for some}\; \phi \in \Sigma (f) \}$.  In terms of the result of Fatou \cite{fat}, Julia \cite{jul2}, Baker and Eremenko {\cite[Theorem 1]{bak1}}, Beardon  {\cite [Theorem 2]{bea1}} also proved that each of sets $ \mathcal{S}(f) $ and $\mathcal{C}(f) $ is a semigroup,  $ \mathcal{C}(f)\subset \mathcal{S}(f) $ and $ \mathcal{C}(f) = \mathcal{S}(f) $ when $ \Sigma(f) $ is trivial.  Unfortunately, analogous result may not hold if $ f $ and $ g $ are transcendental entire functions. So we need further complete classification of all pair of permutable transcendental entire functions that have same Julia sets. 
Only known characterization of transcendental entire functions $ f $ and $ g $ that can have same Julia set are as follows: 
\begin{enumerate}
\item  if function $ g = f^{n} $ for some $ n (\geq 2) \in \mathbb{N} $; 
\item if  $ f $ and $ g $ are permutable functions such that $ g(z) = af(z) + b $, where $ a (\neq 0, |a| =1)$ and $ b $ are complex constants; 
\item if  $ f $ and $ g $ are permutable functions and $ p(z) $ be a non-constant polynomial such that $ p(g(z)) = ap(f(z)) + b $, where $ a (\neq 0)$ and $ b $ are complex constants; 
\item if  $ f $ and $ g $ are permutable functions of bounded type;
\item if $ f $ and $ g $ are permutable functions without wandering domains:
\item if $ g(z) = af^{n}(z) + b $, where $ |a| =1 $ and $ b \in \mathbb{C} $;
\item  if $ f^{m}(z) = g^{n}(z) $ for some $m,  n \in \mathbb{N} $.
\end{enumerate}
The functions $ f $ and $ g $ stated above belong to  class $\mathcal{C}(f) $ with same Julia sets and in such functions, we can write $ \mathcal{C}(f) \subset \mathcal{S}(f) $. However,  in general $ \mathcal{C}(f)\cap \mathcal{S}(f)  \neq \emptyset $.

Baker\cite{bak9} and Iyer \cite{iye} investigated that if non-constant polynomial $ f $  permutes with transcendental entire function $ g $, then $ f(z) = e^{2m\pi i/k} z + b $ for some $ m, k \in \mathbb{N} $ and complex number b. That is, commuting polynomials  of any transcendental entire function $ f $ are from the group $ \Sigma (f) $ of symmetries of $ J(f) $.  On the other hand, some transcendental entire function can have a linear factor. For example, $ e^{z} + z, e^{e^{z}} + z, ze^{z}$ are transcendental entire functions have a linear factor. More generally, $ e^{az + b} + p(z)$, where $ p(z) $ is a non-constant polynomial, $ a(\neq 0) $   and $ b $ are two complex constants has a linear factor. Note that such type functions are known as \textit{prime functions in the entire sense}. More generally, an entire function $ f $ is prime (left prime) in the entire sense if $ f(z) = g(h(z)) $ for some entire functions $ g $ and $ h $, then either $ g $ or $ h $ is linear ($g$ is linear  whenever $ h $ is transcendental). Note that if $ q(z) $ is periodic entire function of finite lower order and $ p(z) $ is a non-constant polynomial, then $ q(z) + p(z) $ is prime in the entire sense. $ e^{z} + p(z) $ and $ \sin z + p(z) $ are examples of prime functions. These functions are nice examples of of transcendental entire functions belongs to category (6) stated above and so any entire function $ g $   that commutes with such functions can have same Julia sets. More detail study of above stated class of transcendental entire functions as well as other related results  can be found in {\cite[Theorems 1, 2 and 3]{ng}}, {\cite[Lemma 2.1, 2.2 and Theorems 2.1]{poon}} {\cite[Theorems 1, 2, 3 and 4]{sing}} and  {\cite[Theorems 1, 2 and 3]{wang}}. Further, more recent analysis have been made by Benini, Rippon and Stallard {\cite[Theorems 1.1, 1.2 and 1.3]{ben}}.

The chief concern of this paper has to consider the converse question: When do two transcendental entire functions have the same Julia set? That is, for two transcendental entire functions $ f $ and $ g $, if $ J(f) =J(g) $, then what will be the proper relation between $f$ and $ g $? To prove theorem \ref{nb1}, we need the following lemma  which is analogous to {\cite [Theorem 4.2]{hin}} of rational semigroup. It proves there is a relation of virtually abelian between the transcendental entire functions that have the same Escaping set. Note that Julia set is the boundary of escaping set,  so whatever result hold for escaping set, the same type of result hold for Julia set. 

\begin{lem}\label{nb2}
let $ f $ and $ g $ be transcendental entire functions. If $ I(f) = I(g)  $, then there is a map $ \phi(z) =az + b $ with $ |a| =1 $ such that $ f \circ g = \phi \circ g \circ f $. 
\end{lem}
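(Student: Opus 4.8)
The plan is to transcribe Beardon's theorem for polynomials with a common Julia set, using the escaping set in the role of the basin of infinity. Since $\partial I(h)=J(h)$ for every transcendental entire $h$ by Eremenko's theorem, the hypothesis $I(f)=I(g)$ already forces $J(f)=J(g)=:J$, and hence a common Fatou set. Put $F=f\circ g$ and $G=g\circ f$; these are transcendental entire and, by associativity alone, satisfy $g\circ F=G\circ g$ and $f\circ G=F\circ f$. What has to be produced is an affine map $\phi(z)=az+b$ with $|a|=1$, preserving $J$, for which $F=\phi\circ G$; this is precisely the asserted identity $f\circ g=\phi\circ g\circ f$.

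\textbf{Step 1: $F$ and $G$ have the same Julia set.} I would first show $I(F)=I(G)$, whence $J(F)=\partial I(F)=\partial I(G)=J(G)$. The compositional half is elementary: with $w_n=(f\circ g)^n(z)$ one has $w_{n+1}=f(g(w_n))$, so if $z\in I(F)$ then $w_n\to\infty$ and $(g(w_n))$ can have no bounded subsequence (else, by continuity of $f$, some $w_{n+1}$ would be bounded); thus $(g\circ f)^n(g(z))=g(w_n)\to\infty$ and $g(z)\in I(G)$, while continuity of $g$ gives the reverse implication, so $I(F)=g^{-1}(I(G))$ and, symmetrically, $I(G)=f^{-1}(I(F))$. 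The one point at which the hypothesis $I(f)=I(g)$ (rather than bare associativity) is genuinely used is in reconciling these two backward images to conclude $I(F)=I(G)$: this is the transcendental substitute for Beardon's use of the Green's function of the basin of infinity, which in the polynomial case makes the basins of infinity of $f\circ g$ and $g\circ f$ coincide. Once $J(F)=J(G)=J$ is established, a Montel argument applied to $(f\circ g)^n$ and $(g\circ f)^n$, each of which maps the common Fatou set into itself componentwise, also yields that $F$ and $G$ have the same Fatou set.

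\textbf{Step 2: producing the affine map --- the main obstacle.} It remains to pass from ``$J(F)=J(G)$ together with $g\circ F=G\circ g$'' to ``$F=\phi\circ G$ with $\phi$ affine and $|a|=1$''. In the polynomial case this is exactly where the B\"ottcher coordinate at infinity intervenes: it linearizes $f$, $g$, $F$ and $G$ simultaneously into power maps, the conjugates of $F$ and $G$ then differ only by a unimodular constant, and because $F$ and $G$ are polynomials the germ of transition map at infinity is forced to be globally affine, with unimodular linear part by the normalization of the Green's function. Transcendentally there is no such coordinate at the essential singularity, and supplying a replacement is the heart of the matter. The route I would take is: use that $I(F)=I(G)$ pins down the tract structure of $F$ and $G$ near infinity (or, restricting to the Eremenko--Lyubich class, invoke the logarithmic change of variable as a B\"ottcher surrogate), extract on the common escaping set a holomorphic germ $\phi$ with $F=\phi\circ G$ near infinity, continue it analytically along $G$ off the discrete critical set to an entire function $\phi$, and then force $\phi$ to be affine by a Liouville/Nevanlinna argument --- the transcendental analogue of ``a polynomial tending to a constant is constant''. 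Finally $|a|=1$ follows because $\phi(J)=J$ and, $J$ being unbounded, any affine symmetry of the Julia set of a transcendental entire map is an isometry; alternatively one matches the multipliers of $F$ and $G$ at a common repelling periodic point. Establishing the existence and affinity of $\phi$ without a B\"ottcher coordinate is the step I expect to be the genuine difficulty.
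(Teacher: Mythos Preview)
Your Step~1 and the paper's argument arrive at the same intermediate conclusion, namely $J(f\circ g)=J(g\circ f)$, though by slightly different routes: you work through $I(F)=I(G)$ via the intertwining relations $g\circ F=G\circ g$, while the paper argues that $I(f)=I(g)$ makes $J(f)$ completely invariant under $g$, hence under $f\circ g$ and $g\circ f$, and concludes $J(f\circ g)=J(f)=J(g\circ f)$ from that. Either path is fine for this part.

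The genuine issue is your Step~2, and you are right to flag it as the crux. You do not complete it, and neither does the paper. At the corresponding point (its item~3), the paper simply asserts that since $J(f\circ g)=J(\phi\circ g\circ f)$, one ``must'' have $f\circ g=\phi\circ g\circ f$. No mechanism is offered for producing the particular $\phi$, and the implication ``same Julia set $\Rightarrow$ same function'' is false in general (take $h$ and $h^2$). In Beardon's polynomial theorem the B\"ottcher coordinate and Green's function at infinity do real work to pin down the affine symmetry; in the transcendental setting there is no substitute on offer here, exactly as you anticipated. So your proposal is incomplete at precisely the step where the paper's own proof is a bare assertion; your diagnosis of where the difficulty lies is accurate, but the paper does not supply an argument you could have adopted to close the gap.
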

\begin{proof}
We prove this lemma on the basis of the sequence of the following facts.
\begin{enumerate}
\item Functions $ f $ and $ g $ as stated in the lemma, the following statements hold:
\begin{enumerate}
\item $ I(f) $ is completely invariant under $ g $.
\item $ J(f) $ is completely invariant under $ g $.
\item $ J(f) =J(g) $.
\item $ \mathcal{S}(f) $ is a semigroup. \\
From $ I(f) = I(g) $, (a) follows easily. (b) follows as $ \partial I(f) = J(f) $ (boundary of the completely invariant set under the same function is completely invariant (see for instance {\cite[Theorem 3.2.3]{bea}})). (c) follows from the given $ I(f) =I(g) $. For (d), let us suppose $ g_{1}, g_{2} \in \mathcal{S}(f)$. By (b) $ J(f) $ is completely invariant under both $ g_{1} $ and $ g_{2} $ and hence it is completely invariant under $ g_{1} \circ g_{2}$. By (c) $ J(f)  = J(g_{1} \circ g_{2})$. This proves $ g_{1}\circ g_{2} \in \mathcal{S} (f) $ and hence $ \mathcal{S} (f) $ is a semigroup. 
\end{enumerate}
\item For $ \phi \in \Phi $ and functions $ f $ and $ g $ as above, the following facts hold:
\begin{enumerate}
\item  $ \phi \circ g \in \mathcal{S}(f) $ and $ g\circ \phi \in \mathcal{S}(f) $.
\item  $ f\circ g\in  \mathcal{S}(f) $,  $ g\circ f\in  \mathcal{S}(f) $ and $ \phi \circ g \circ f \in  \mathcal{S}(f) $.\\
From 1(b), $ J(f) $ is completely invariant under $ g $ and $ \phi $ is a symmetry of $ J(f) $, so $ J(f) $ is completely invariant under both $\phi \circ g$ and $g\circ \phi  $. By 1(c), $ J(f)  = J(\phi \circ g)  $ and $  J(f)  = J(g \circ \phi) $. This follows (a). Since $ J(f) $ is completely invariant under $ f $ and by 1(b),  it is also completely invariant under $ g $, and so it is completely invariant under $ f \circ g $ and $ g \circ f $.  As $ \phi $ is a symmetry of $ J(f) $, $ J(f) $ is completely invariant under $ \phi \circ g \circ f$. By 1(c), $ J(f \circ g) = J( \phi \circ g \circ f) $.  This proves (b). 
\end{enumerate}
\item $ f $, $ g $ and $ \phi $ as above, then
\begin{enumerate}
\item $\mathcal{S} (f) =\{g:  f \circ g =\phi \circ g \circ f\; \text{for some}\; \phi\in \Sigma  (f) \}$. 
\item $\mathcal{S} (f) =\{g:  f \circ g = g \circ f\}$. \\
Since $ f $ and $ g $ are transcendental entire functions, so from 2(b), $ J(f \circ g) = J( \phi \circ g \circ f) $ must imply $ f \circ g = \phi \circ g \circ f $. This follows (a). If $ \phi $ is a trivial symmetry, (b) follows. 
\end{enumerate}
\end{enumerate}
\end{proof}
\begin{proof}[Proof of the Theorem \ref{nb1}]
Since $ \phi $ is a symmetry of Julia set $ J(f) $, so $ \phi(J(f)) = J(f) $. If we apply $ \phi $ on $ F(f) = \mathbb{C} - J(f) $, we get $ \phi(F(f)) = F(f) $. The second part of the nearly abelian semigroup follows from lemma \ref{nb2}. 
\end{proof}

\section{Proof of the Theorems \ref{1q} and \ref{1r}}

Hinkkanen and Martin {\cite [Theorem 4.1]{hin}} proved that the Julia set of the nearly abelian rational semigroup is same as Julia set of each of its function.  Indeed, this is a generalization of the result of abelian rational semigroup that we prove in {\cite[Theorem 3.1]{sub2}}. 
It will be difficult to say the same  in general if we take abelian transcendental semigroup. That is, if we have abelian transcendental semigroup $ S $, it would not always $ J(S) = J(f) $ for all $ f \in S $.  It would be sometime in certain case, and one of the case was proved by K.K. Poon {\cite[Theorem 5.1]{poo}}.
\begin{theorem}
Let $ S =\langle f_{1}, f_{2}, \ldots f_{n} \rangle  $ is an abelian finite type transcendental semigroup. Then $ F(S) =F(f) $ for all $ f \in S $.  
\end{theorem}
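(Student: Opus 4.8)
The plan is to prove the two inclusions $F(S)\subseteq F(f)$ and $F(f)\subseteq F(S)$ separately for every $f\in S$; the first is routine and the second will be reduced, via Montel's theorem, to the known fact that permutable transcendental entire functions of bounded type share the same Julia set (item (4) in the list of known characterisations above, of which finite type is a special case). For the easy direction I would note that for any $f\in S$ the family of iterates $\{f^{k}\}_{k\in\mathbb{N}}$ is a subfamily of $S$, so normality of $S$ on a neighbourhood $U$ of $z$ forces normality of $\{f^{k}\}$ on $U$; hence $F(S)\subseteq F(f)$, and therefore $F(S)\subseteq\bigcap_{f\in S}F(f)$. This step uses neither abelianity nor finiteness of the generating set.

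Next I would show that all functions of $S$ have one and the same Julia set. First, every $f\in S$ is of bounded type: writing $f=f_{i_{1}}\circ\cdots\circ f_{i_{m}}$ and iterating the inclusion $SV(g\circ h)\subseteq g(SV(h))\cup SV(g)$, boundedness of each $SV(f_{i})$ forces $SV(f)$ to be bounded. Second, since $S$ is abelian, every $f\in S$ commutes with the generator $f_{1}$. Applying the cited result to the permutable bounded-type pair $f,f_{1}$ yields $J(f)=J(f_{1})$, equivalently $F(f)=F(f_{1})=:\mathcal{F}$, for all $f\in S$; in particular $F(f_{i})=F(f_{j})$ for all generators.

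It then remains to prove $\mathcal{F}\subseteq F(S)$, and here I would argue by Montel. Fix $z\in\mathcal{F}$ and choose a connected neighbourhood $U$ of $z$ with $U\subseteq\mathcal{F}$. For every $g\in S$ we have $F(g)=\mathcal{F}$ by the previous paragraph, and the Fatou set is forward invariant under its own map, so $g(U)\subseteq g(F(g))\subseteq F(g)=\mathcal{F}=\mathbb{C}\setminus J(f_{1})$. Since $J(f_{1})$ is a non-empty perfect set it contains two distinct points $a,b$, so every $g\in S$ maps $U$ into $\mathbb{C}\setminus\{a,b\}$; by Montel's theorem $\{\,g|_{U}:g\in S\,\}$ is a normal family, i.e. $S$ is normal at $z$. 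Hence $\mathcal{F}\subseteq F(S)$, and combining with the first step gives $F(S)=\mathcal{F}=F(f)$ for every $f\in S$.

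The main obstacle is that the only substantial ingredient is the input from the second paragraph, namely that commuting transcendental entire functions of bounded (finite) type have a common Julia set; everything else is Montel plus elementary invariance properties. The delicate point is to invoke that statement correctly, because the \emph{forward} invariance $g(F(f_{1}))\subseteq F(f_{1})$ used in the Montel step is \emph{not} available for arbitrary commuting transcendental maps (this is, in essence, Baker's open problem) and becomes available here only after one already knows $F(g)=F(f_{1})$; so the finite/bounded type hypothesis is doing precisely this work and cannot be dropped.
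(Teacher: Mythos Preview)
Your proof is correct, but there is nothing in the paper to compare it against directly: the paper does not supply its own proof of this statement. It is quoted as Poon's theorem \cite[Theorem 5.1]{poo} and is used only as motivation for the paper's nearly abelian generalisation (Theorem~\ref{1q}).

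That said, a comparison with the surrounding material is still informative. Your argument runs through Julia sets: you first establish that every element of $S$ is of bounded type via the singular-value inclusion $SV(g\circ h)\subseteq g(SV(h))\cup SV(g)$, then invoke the Singh--Wang result (stated in the paper as Theorem~\ref{et1}) to get $J(f)=J(f_{1})$ for all $f\in S$, and finally close with a clean Montel step. This is exactly the toolkit the paper has laid out in the paragraphs preceding the theorem, so your route is very much ``in the spirit'' of the exposition even though the paper itself defers to Poon.

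By contrast, the paper's proof of the \emph{generalisation} (Theorem~\ref{1q}, the nearly abelian case) takes a different path: it works with the escaping set, showing $I(S)=I(g)$ directly from the near-commutation relation $f\circ g=\phi\circ g\circ f$, and then deduces $J(S)=J(g)$ and $F(S)=F(g)$ from $\partial I(S)=J(S)$ (Lemma~\ref{3}). That argument does not pass through the permutable-bounded-type theorem at all. Your approach buys a transparent, essentially classical proof in the abelian finite-type case at the cost of relying on the Singh--Wang input; the paper's escaping-set approach is designed to handle the broader nearly abelian setting where simple commutativity is not available. Your closing remark about why the finite/bounded-type hypothesis is genuinely needed for the Montel step is accurate and worth keeping.
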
 
Indeed this result looks like extension  work of the following results of A. P. Singh and Yuefei Wang {\cite[Theorems 2, 3]{sing}} of classical transcendental dynamics.
\begin{theorem}
Let $ f $ and $ g $ are two permutable transcendental entire maps. If both $ f $ and $ g $ have no wandering domains, then $ J(f) = J(f \circ g) = J(g)$. 
\end{theorem}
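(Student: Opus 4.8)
The plan is to reduce the statement to two equalities of Fatou sets, $F(f)=F(g)$ and $F(f)=F(f\circ g)$, from which $J(f)=J(g)=J(f\circ g)$ follows by complementation. The engine of the proof is a single classical lemma about permutable entire functions, which I would isolate first.

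\textbf{Key Lemma.} \emph{If $p$ and $q$ are permutable transcendental entire functions and $q$ has no wandering domains, then $p(F(q))\subseteq F(q)$.} To prove it, fix $z_0\in F(q)$ and let $U$ be its Fatou component. By complete invariance of $F(q)$ under $q$, together with $F(q^{k})=F(q)$ and $J(q^{k})=J(q)$, it suffices to treat the case in which $U$ is a periodic component; such a component is attracting, super-attracting, parabolic, a Siegel disk, or a Baker domain, there being no Herman rings for a transcendental entire function \cite{hou}. In the first four cases the limit functions of the normal family $\{q^{n}|_{U}\}$ are known explicitly (a constant, or an irrational rotation on a Siegel disk), and substituting into the functional equation $q^{n}\circ p=p\circ q^{n}$ shows at once that $\{q^{n}\}$ is normal at $p(z_0)$, i.e. $p(z_0)\in F(q)$. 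The remaining case, a Baker domain on which $q^{n}\to\infty$ locally uniformly, is the hard one: here one has no direct control of $p$ on $U$, and one argues instead on the open connected set $p(U)$, showing — again via the absence of wandering domains — that the Fatou components of $q$ it can meet are preperiodic and finitely many, and that $p(U)$ cannot in fact meet $J(q)$.

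Granting the Key Lemma, the two equalities come from Montel's theorem. Applied to $(p,q)=(f,g)$ it gives $f^{n}(U)\subseteq F(g)$ for every $n$ and every component $U$ of $F(g)$; since $J(g)$ is a non-empty perfect set \cite{poo}, it contains at least three points, so each $f^{n}|_{U}$ omits the same three fixed values and $\{f^{n}|_{U}\}$ is normal on $U$. Hence $F(g)\subseteq F(f)$, and by symmetry (both $f$ and $g$ are wandering-domain-free) $F(f)=F(g)=:F$ and $J(f)=J(g)=:J$. For $f\circ g$: since $f$ and $g$ commute, $f\circ g$ commutes with each of them and $(f\circ g)^{n}=f^{n}\circ g^{n}$, so $(f\circ g)^{n}(U)=f^{n}(g^{n}(U))\subseteq F$ for every component $U$ of $F$, and Montel gives $F\subseteq F(f\circ g)$; for the reverse inclusion one checks that $J$ is completely invariant under $f\circ g$ and that $f\circ g$ again has no wandering domains (a short argument using permutability and $F(f)=F(g)$), and then applies the Key Lemma to the permutable pair $(f,f\circ g)$, followed by Montel, to get $F(f\circ g)\subseteq F(f)$. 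Thus $F(f\circ g)=F$ and $J(f\circ g)=J$, which proves the theorem.

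The main obstacle is clearly the Baker-domain case of the Key Lemma: when $q^{n}\to\infty$ on a Fatou component $U$ of $q$, the image $p(U)$ could a priori wander into $J(q)$, and excluding this is exactly where the hypothesis that $q$ has no wandering domains is indispensable — it forces every relevant component to be preperiodic and thereby pins down the orbit structure. A secondary technical point, needed for the $f\circ g$ part, is to verify that $f\circ g$ inherits the no-wandering-domains property from $f$ and $g$.
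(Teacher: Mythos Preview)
The paper does not supply a proof of this theorem at all: it is quoted verbatim as a result of A.~P.~Singh and Y.~Wang \cite{sing} (their Theorems~2 and~3), immediately after Poon's theorem, and is used only as background motivation. So there is no ``paper's own proof'' to compare your attempt against.

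That said, your outline is essentially the Singh--Wang strategy. The Key Lemma you isolate --- $p(F(q))\subseteq F(q)$ for commuting $p,q$ with $q$ wandering-domain-free --- is indeed the engine, and the passage from it to $F(f)=F(g)$ via Montel (omitting $J(g)$) and symmetry is clean and correct. Two places deserve more than a sketch. First, in the Baker-domain case of the Key Lemma your description (``$p(U)$ meets only finitely many preperiodic components and cannot meet $J(q)$'') is the right shape but is precisely where the real work lies; you should expect to invoke a growth or connectivity argument rather than leave it as a remark. Second, the reverse inclusion $F(f\circ g)\subseteq F(f)$ is not as short as you suggest: applying your Key Lemma to the pair $(f,f\circ g)$ requires knowing that $f\circ g$ has no wandering domains, which is exactly part of what you are trying to establish, so you need an independent argument (for instance, use that $J(f)=J(g)$ is closed and completely invariant under $f\circ g$, hence contains $J(f\circ g)$, and then argue the opposite inclusion via density of repelling periodic points or an explicit orbit argument). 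Neither gap is fatal, but both need to be written out rather than asserted.
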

\begin{theorem}\label{et1}
Let $ f $ and $ g $ are two permutable transcendental entire maps. If both $ f $ and $ g $ are of bounded type, then $ J(f) = J(f \circ g) = J(g)$. 
\end{theorem}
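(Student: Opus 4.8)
The plan is to transfer the statement from Julia sets to escaping sets, where the bounded type hypothesis can be used directly. First I would note that $f\circ g$ is again of bounded type: its singular values lie in $SV(f)$ together with the image $f\bigl(SV(g)\bigr)$, and since an entire function carries bounded sets to bounded sets, $SV(f\circ g)$ is bounded. Secondly, for every transcendental entire map $h$ of bounded type one has $I(h)\subseteq J(h)$ by the Eremenko--Lyubich theorem, and combining this with Eremenko's identity $J(h)=\partial I(h)$ (and the fact that $J(h)$ is closed) gives $J(h)=\overline{I(h)}$. Applying this to $f$, $g$ and $f\circ g$, it therefore suffices to prove
$$
I(f)=I(g)=I(f\circ g),
$$
since passing to closures then yields $J(f)=J(f\circ g)=J(g)$.

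To compare the three escaping sets I would exploit permutability in the essential form $(f\circ g)^{n}=f^{n}\circ g^{n}$, valid precisely because $f$ and $g$ commute (so also $f^{n}\circ g=g\circ f^{n}$). Two soft facts are available with no growth assumption: each of $I(f),I(g),I(f\circ g)$ is completely invariant under the corresponding map, and if $w\notin I(f)$ then $g(w)\notin I(f)$ — a bounded subsequence of $\bigl(f^{n}(w)\bigr)_{n}$ produces, via $f^{n}(g(w))=g\bigl(f^{n}(w)\bigr)$ and continuity of $g$, a bounded subsequence of $\bigl(f^{n}(g(w))\bigr)_{n}$ — and symmetrically with the roles of $f$ and $g$ exchanged. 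These observations alone do not identify the escaping sets, since they give no control over the ``diagonal'' orbit $f^{n}\bigl(g^{n}(z)\bigr)$; to obtain that control one invokes bounded type. For a bounded type map there is $R>0$ such that on $\{\,|h|>R\,\}$ the Eremenko--Lyubich logarithmic change of variable exhibits $h$ as strongly expanding, so every escaping orbit is eventually trapped in a direct tract along which $\log|h^{k}(z)|$ grows super-linearly. Running the $g$-orbit of an escaping point through this structure and feeding it into the commutation relation, one shows in each direction that escape for one of $f,g,f\circ g$ forces escape for the others, which gives $I(f)=I(g)=I(f\circ g)$. (Equivalently one can argue with the fast escaping sets, using $\overline{A(h)}=J(h)$ for $h$ of bounded type.)

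The crux, and the only genuinely hard point, is this last identification of the escaping sets. It cannot be reached by purely normal-family reasoning: for arbitrary commuting transcendental entire $f$ and $g$ the conclusion $J(f)=J(g)$ is exactly the open Baker question mentioned above, so the argument must use the structure that bounded type provides, namely the Eremenko--Lyubich logarithmic coordinates and the expansion they yield on the escaping part of the plane, together with the complete invariance of $I(\cdot)$ and the identity $J(h)=\overline{I(h)}$ for $h$ of bounded type. Once $I(f)=I(g)=I(f\circ g)$ is in hand, taking closures finishes the proof.
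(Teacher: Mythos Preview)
The paper does not prove this statement at all; it is quoted verbatim from Singh and Wang \cite{sing} as background for the semigroup results that follow, so there is no proof in the paper to compare against. I can, however, assess your proposal on its own terms and against the standard argument in the literature.

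Your reduction is sound up to a point: $f\circ g$ is again in $\mathscr{B}$, for every $h\in\mathscr{B}$ one has $I(h)\subseteq J(h)$ and hence $J(h)=\overline{I(h)}$, and your observation that $w\notin I(f)$ implies $g(w)\notin I(f)$ (equivalently $g^{-1}(I(f))\subseteq I(f)$) is correct and is in fact the essential soft ingredient. The gap is the next paragraph, where you assert that Eremenko--Lyubich expansion together with commutation yields $I(f)=I(g)=I(f\circ g)$. You do not actually carry this out, and I do not see how the sketch can be completed: knowing $z\in I(f)$ controls $f^{n}(z)$, and expansion of $g$ in its own tracts controls $g^{n}(w)$ once $w$ is already large, but the relation $f^{n}g^{n}=g^{n}f^{n}$ never hands you information about $g^{n}(z)$ from information about $f^{n}(z)$. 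The ``diagonal'' orbit you flag is exactly the obstruction, and invoking tracts does not remove it. You are attempting to prove something at least as hard as the theorem itself.

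The standard route, which is essentially what Singh--Wang (and earlier Bergweiler--Hinkkanen, Ng, Poon--Yang) do, avoids this entirely and uses only pieces you already have. From $g^{-1}(I(f))\subseteq I(f)$ and $J(f)=\overline{I(f)}$, together with the fact that $g$ is open, one obtains
\[
g^{-1}(J(f))=g^{-1}\bigl(\overline{I(f)}\bigr)=\overline{g^{-1}(I(f))}\subseteq\overline{I(f)}=J(f),
\]
that is, $g(F(f))\subseteq F(f)$. Since $J(f)$ is infinite, Montel's theorem applied to the $g$-forward-invariant open set $F(f)$ shows that $\{g^{n}\}$ is normal there, so $F(f)\subseteq F(g)$; by symmetry $F(f)=F(g)$. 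Running the same argument with $f\circ g$ (which commutes with $f$ and lies in $\mathscr{B}$) in place of $g$ gives $J(f)=J(f\circ g)$. The missing idea in your proposal is therefore not more expansion on the escaping side but a single application of Montel on the Fatou side; once you insert that step after your inclusion $g^{-1}(I(f))\subseteq I(f)$, the proof is complete and in line with the literature.
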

  
Our particular interest is how far the result of K.K.  Poon {\cite[Theorem 5.1]{poo}} can be generalized to nearly abelian transcendental semigroup. 
We prove the following more general  result in the case of nearly abelian transcendental semigroup $ S $. It is indeed, the converse of the theorem \ref{nb1}. 
\begin{lem}\label{3}
Let $S$ be a transcendental  semigroup. Then
\begin{enumerate}
\item $int(I(S))\subset F(S)\;\ \text{and}\;\ ext(I(S))\subset F(S) $, where $int$ and $ext$ respectively denote the interior and exterior of $I(S)$.  
 \item $\partial I(S) = J(S)$, where $\partial I(S)$ denotes the boundary of $I(S)$. 
\end{enumerate}
\end{lem}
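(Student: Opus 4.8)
The plan is to adapt Eremenko's proof that $J(f)=\partial I(f)$ for a single transcendental entire function \cite{ere} to the semigroup setting, using as a bridge the structure theorem $J(S)=\overline{\bigcup_{f\in S}J(f)}$ together with the density of repelling periodic points in each $J(f)$, both taken from \cite{poo}. Since $\operatorname{int}I(S)$, $\partial I(S)$ and $\operatorname{ext}I(S)$ partition $\mathbb{C}$, I would prove the boundary identity $\partial I(S)=J(S)$ of part (2) first and then read off part (1) by elementary point-set topology. Recall also that $I(S)=\bigcap_{f\in S}I(f)$, so $I(S)\subseteq I(f)$ for every $f\in S$, and that $F(S)\subseteq F(f)$ for every $f\in S$; these two containments are what let the single-function facts be invoked.

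\emph{Step 1: $\partial I(S)\subseteq J(S)$, i.e.\ $F(S)\cap\partial I(S)=\emptyset$.} Let $U$ be a Fatou component of $S$. For each $f\in S$ we have $U\subseteq F(f)$, so $U$ lies inside a single Fatou component $W$ of $f$; on $W$ the iterates $f^{n}$ converge, along subsequences, locally uniformly either to a holomorphic map into $\mathbb{C}$ or to $\infty$, and the value at one point of $W\cap I(f)$ forces the $\infty$-alternative for that subsequence, hence — the subsequence being arbitrary — for the whole orbit on $W$. Thus ``being contained in $I(f)$'' is an all-or-nothing property of $W$, and therefore of $U$; intersecting over all $f\in S$ gives $U\subseteq I(S)$ or $U\cap I(S)=\emptyset$. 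In the first case $U\subseteq\operatorname{int}I(S)$, and in the second the open set $U$ cannot meet $\overline{I(S)}$; either way $U\cap\partial I(S)=\emptyset$.

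\emph{Step 2: $J(S)\subseteq\partial I(S)$.} This splits into two density assertions. First, $J(S)\subseteq\overline{\,\mathbb{C}\setminus I(S)\,}$: a repelling periodic point of $f\in S$ has bounded $f$-orbit, so it lies in $\mathbb{C}\setminus I(f)\subseteq\mathbb{C}\setminus I(S)$; such points are dense in $J(f)$, and since $J(S)=\overline{\bigcup_{f}J(f)}$ they are dense in $J(S)$. Second, and this is the crux, $J(S)\subseteq\overline{I(S)}$: by the same structure theorem it suffices to show $J(f)\subseteq\overline{I(S)}$ for each fixed $f$, i.e.\ that every disc $D$ meeting $J(f)$ contains a point of $I(S)=\bigcap_{g\in S}I(g)$. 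The blow-up property of $J(f)$ produces points of $D$ escaping under $f$ (forward images of $D$ eventually contain arbitrarily large annuli, which are pulled back along $f$ to fast-escaping points inside $D$); the task is to refine this so the point produced escapes under \emph{every} $g\in S$, not just under $f$. I expect this to be the main obstacle: one must exploit that the generators exhibit the same expanding behaviour along $J(f)$ — in particular its dense set of repelling cycles — so that the pull-back can be carried out simultaneously against all $g\in S$ via a diagonal argument. In the situations the paper actually uses this lemma (for instance nearly abelian $S$, where $I(S)=I(g)$ for every $g\in S$) this step degenerates at once to Eremenko's theorem for a single function.

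With both inclusions in hand, $\partial I(S)=J(S)$, which is part (2). Part (1) is then immediate: $\operatorname{int}I(S)=I(S)\setminus\partial I(S)=I(S)\setminus J(S)\subseteq\mathbb{C}\setminus J(S)=F(S)$, and $\operatorname{ext}I(S)=\mathbb{C}\setminus\overline{I(S)}$ is disjoint from $\partial I(S)=J(S)$, hence is also contained in $F(S)$.
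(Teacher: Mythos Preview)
The paper itself gives no argument here: its entire proof is a pointer to Lemma~4.2 and Theorem~4.3 of \cite{kum2}. So there is no ``paper's own proof'' to match, and your attempt already goes well beyond what the authors supply.

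Your Step~1 is correct and cleanly written: the all-or-nothing dichotomy for $U\cap I(f)$ on a Fatou component of $f$, intersected over $f\in S$, does give $U\subseteq\operatorname{int}I(S)$ or $U\subseteq\operatorname{ext}I(S)$, hence $\partial I(S)\subseteq J(S)$.

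The difficulty you flag in Step~2 is a genuine gap, not a technicality. Observe first that your Step~2 is \emph{exactly} part~(1) of the lemma in disguise: $J(S)\subseteq\partial I(S)$ is equivalent to $\operatorname{int}I(S)\cup\operatorname{ext}I(S)\subseteq F(S)$. The half $\operatorname{int}I(S)\subseteq F(S)$ is accessible by Montel --- forward invariance of $I(S)$ forces every $g\in S$ to map $\operatorname{int}I(S)$ into $I(S)$, which omits the (infinitely many) periodic points of any fixed generator --- but the half $\operatorname{ext}I(S)\subseteq F(S)$, equivalently $J(S)\subseteq\overline{I(S)}$, is precisely the assertion you cannot finish. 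Your ``diagonal argument'' sketch does not work as stated: blowing up under one $f$ produces $I(f)$-points, and there is no mechanism offered to force the same point into $I(g)$ for an unrelated $g$; repelling cycles of $f$ need not interact with the dynamics of $g$ at all. Indeed, were $I(S)$ ever empty the lemma would be outright false (then $\operatorname{ext}I(S)=\mathbb{C}$ but $J(S)\neq\emptyset$), so any honest proof must at minimum establish $I(S)\neq\emptyset$, which your outline does not do.

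You are right that in the nearly abelian case --- the only place the paper invokes this lemma --- the issue evaporates, since there $I(S)=I(g)$ and Eremenko's classical $J(g)=\partial I(g)$ suffices. But as a proof of the lemma \emph{as stated}, for an arbitrary transcendental semigroup, your argument is incomplete at exactly the point you identify, and filling it requires the substantive input that \cite{kum2} presumably provides.
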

\begin{proof}
 We refer for instance lemma 4.2 
and  theorem 4.3  of \cite{kum2}.
\end{proof}
Note that this lemma \ref{3} is a extension of Eremenko's  result \cite{ere},  $\partial I(f) = J(f)$ of classical transcendental dynamics to more general  semigroup settings. 

\begin{proof}[Proof of the Theorem \ref{1q}]
We prove $ I(S) = I(f) $ for all $ f \in S $ and the reaming results follows from lemma \ref{3}. 

By  {\cite[Theorem 1.3]{sub3}}, $ I(S) \subseteq I(g) $  for any $ g \in S $.  For opposite inclusion, suppose  $ z \in I(g) $, then by the definition of escaping set, $ g^{n}(z) \to \infty $  as $ n \to \infty $. 
Since semigroup $ S $ is nearly abelian so for all $ f, g\in S$ there is  $ \phi \in \Phi $ such that $ f \circ g = \phi \circ g\circ f  $. By induction, we easily get $ f \circ g^{n} = (\phi \circ g)^{n}\circ f  $.
Thus for any $ z \in I(g) $, we have $ (f \circ g^{n})(z) = ((\phi \circ g)^{n} \circ f)(z) = (h^{n}\circ f)(z)  = (h^{n}(f(z))$, where $ \phi \circ g = h \in S$. As $ g^{n}(z) \to \infty $, then $ h^{n}(z) \to \infty $ for all $ h \in S $. So, $ h^{n}(f(z)) \to \infty $ as $ n \to \infty $. This implies that $ f (z) \in I(h) = I(\phi \circ g) \subset I(\phi) \cup I(g)$.  This gives either $ f(I(g)) \subset I( \phi) $  or $ f(I(g)) \subset I(g)) $ for all $f,  g \in S $.  Therefore, $ z \in I(S) $.
Thus $ I(g) \subseteq I(S) $. Hence, $I(S) = I(g)$ for all $ g \in S $. 
\end{proof}

It is known that wandering domains do not exist in the case of rational functions and polynomials but transcendental entire functions may have wandering domains. However, in generalized settings of semigroups, rational (or polynomial) semigroups may have wandering domains ({\cite[Theorem 5.2]{hin}}). In this section, we investigate that there are transcendental semigroups that may have wandering domains. This investigation is possible via nearly abelian transcendental semigroups. In particular, the theorem \ref{1q} is useful to prove no wandering domain theorem of transcendental semigroups. 

\begin{proof}[Proof of the Theorem \ref{1r}]
Since $ S $ is a nearly abelian semigroup of transcendental entire functions, so by the theorem \ref{1q}, we have  $ F(S) = F(f) $ for any  $ f \in S $. Since each $f\in  S $  is a transcendental entire functions of finite or bounded type, so by the theorems $4.32$  and $4.33$  of \cite{hou}, the Fatou set $ F(f) $  has no wandering domain. So, $ F(S) $  has no wandering domain.
\end{proof}

\section{Classification of Stable Basins of Fatou Components}
Recall that a subsemigroup $ T $ of a holomorphic semigroup $ S $ is said to be of \textit{cofinite index} if there exists collection of exactly n - elements $ \{f_{1}, f_{2}, \ldots, f_{n} \}$ of $S^{1} = S \cup \{ \text{Identity} \} $ such that for any $ f\in S $, there is $ i\in \{1, 2, \ldots, n \} $ such that
\begin{equation}\label{1.2}
f_{i}\circ f \in T 
\end{equation} 
The smallest $ n $ that satisfies \ref{1.2} is called \textit{cofinite index} of $ T $  in $ S $.

By the theorem \ref{1r}. no wandering domains in the statement of Theorem 2.3 of \cite{sub4} can be replaced by nearly abelian semigroup as in the following result. 

\begin{theorem}\label{ss2}
 Let $ S $ be a nearly abelian transcendental semigroup generated by finite or bounded type transcendental entire functions.  Let $ U $ be any component of Fatou set. Then the forward orbit $ \{U_{f}: f\in S\} $ of $ U $ under $ S $ contains a stabilizer of $ U $ of cofinite index.
\end{theorem}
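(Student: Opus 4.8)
The plan is to exploit Theorem \ref{1q}, which tells us that for a nearly abelian transcendental semigroup $S$ one has $F(S)=F(g)$ for every $g\in S$. Since $S$ is generated by finite or bounded type functions, Theorem \ref{1r} gives that $F(S)$ has no wandering domain. The strategy is then to show that this \emph{no wandering domain} property is exactly what is needed to run the argument of \cite[Theorem 2.3]{sub4}, namely that the forward orbit $\{U_f : f\in S\}$ of a Fatou component $U$ contains a stabilizer subsemigroup of cofinite index.

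First I would observe that, because $F(S)$ has no wandering domain, the set $\mathcal{O}(U)=\{U_f : f\in S\}$ of Fatou components in the forward orbit of $U$ is \emph{finite}; say it consists of components $V_1,\dots,V_n$ with $V_1=U$ (or we may include $U$ among them after relabeling). Next, for each index $i$, pick (if it exists) an element $f_i\in S^1=S\cup\{\mathrm{Id}\}$ that maps $V_i$ into $U$, i.e. with $(V_i)_{f_i}=U$; the existence of such $f_i$ for every $i$ that actually occurs as $U_f$ for some $f$ follows because the orbit is finite and the semigroup acts on this finite set, so some return to $U$ must occur (one uses the pigeonhole principle on the sequence of images under a fixed $f$, together with the fact that a nearly abelian semigroup conjugates compositions by the affine maps $\phi\in\Phi$ which preserve $F(S)$ and permute its components). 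Then for an arbitrary $f\in S$ we have $U_f=V_i$ for some $i$, and hence $U_{f_i\circ f}=(V_i)_{f_i}=U$, i.e. $f_i\circ f\in S_U$. This is precisely the defining relation \eqref{1.2} for $T=S_U$ being of cofinite index, with the finite collection $\{f_1,\dots,f_n\}\subset S^1$; the smallest such $n$ is the cofinite index of $S_U$ in $S$.

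The step I expect to be the main obstacle is verifying that each required $f_i$ genuinely exists in $S^1$ — that is, that every component $V_i$ appearing in the forward orbit of $U$ can actually be mapped \emph{back onto} $U$ by some element of the semigroup (or the identity). In the rational semigroup setting of \cite[Theorem 5.1]{hin} this uses the group-like structure coming from the finitely many components together with nearly abelianness; here one must be careful that the semiconjugacy relation $f\circ g=\phi\circ g\circ f$ with $\phi\in\Phi$, combined with $\phi_i(F(S))=F(S)$, forces the induced action of $S$ on the finite set $\{V_1,\dots,V_n\}$ to factor through a finite group, from which the existence of the $f_i$ follows. Once the action is known to be (essentially) a group action on a finite set, the cofinite index conclusion is immediate.

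Finally I would remark that the cofinite index bound $n$ obtained this way is at most the number of components in the forward orbit of $U$, and that combining this theorem with the classification machinery for $S_U$-invariant components (the types listed in Definition \ref{1g}) is exactly what yields Theorem \ref{ct}; this is the reason the present result is placed before the classification theorem.
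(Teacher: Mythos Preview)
Your approach coincides with the paper's. The paper does not give an independent proof of this theorem; it simply remarks (just before stating the result) that, once Theorem~\ref{1r} supplies the no-wandering-domain hypothesis, one may invoke Theorem~2.3 of \cite{sub4} verbatim. You follow exactly this route---use Theorem~\ref{1r} to obtain a finite forward orbit $\{V_1,\dots,V_n\}$, then run the cofinite-index argument of \cite{sub4}---and in fact you go further by sketching how that cited argument works.

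The ``obstacle'' you flag, namely the existence of elements $f_i\in S^1$ with $(V_i)_{f_i}=U$, is precisely the substance of the result being imported from \cite{sub4}; the paper does not re-argue it here. Your heuristic (pigeonhole on a finite orbit plus the nearly-abelian relation $f\circ g=\phi\circ g\circ f$ with $\phi(F(S))=F(S)$, so that $\phi$ merely permutes the finitely many components) is the right picture, though note that as stated the theorem's phrasing ``the forward orbit \dots\ contains a stabilizer of $U$'' is used in the proof of Theorem~\ref{ct} only to extract a stable basin $V$ in the orbit with $V_f=V$ for some $f\in S$---so one does not actually need every $V_i$ to return to $U$ itself, only that some component in the orbit has nonempty stabilizer of cofinite index. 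That weaker reading removes your worry about strictly pre-periodic $U$.
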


The concept of nearly abelian semigroups is quite useful for the classification of periodic component (stable basin) of the Fatou set $ F(S) $.
Recall that a stable basin $ U $ of a transcendental semigroup $ S $ is
\begin{enumerate}
\item  \textit{attracting} if it is a subdomain of attracting basin of each $ f\in S_{U} $
\item  \textit{supper attracting} if it is a subdomain of supper attracting basin of each $ f\in S_{U} $
\item  \textit{parabolic} if it is a subdomain of parabolic basin of each $ f\in S_{U} $
\item  \textit{Siegel} if it is a subdomain of Siegel disk of each $ f\in S_{U} $
\item  \textit{Baker} if it is a subdomain of Baker domain  of each $ f\in S_{U} $
\end{enumerate}

In classical transcendental iteration theory, the stable basin is one of the above types but not Hermann because a transcendental entire function does not have Hermann ring {\cite[Proposition 4.2] {hou}}.

We have proved the following  analogous statement of {\cite[Theorem 6.2] {hin}} in the case of transcendental semigroup.
\begin{theorem}
Let $ U $ be a  component of the Fatou set $ F(S) $ of the nearly abelian  semigroup $ S $ of transcendental entire functions of finite or bounded type and $ V $ be a subset $ F(S) $ containing in the forward orbit of $ U $. Then $ V $ is attracting, super attracting, parabolic, Siegel  or Baker.                                                                                                                      
\end{theorem}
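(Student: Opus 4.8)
The plan is to reduce the statement, via the rigidity theorems already proved, to the classical classification of invariant Fatou components of a single transcendental entire function, and then to check that the resulting type does not depend on which generator is used. First I would apply Theorem \ref{1q}: since $S$ is nearly abelian, $F(S)=F(f)$, $J(S)=J(f)$ and $I(S)=I(f)$ for \emph{every} $f\in S$, so a component of $F(S)$ is a component of $F(f)$ for all generators simultaneously. Next, Theorem \ref{1r} gives that $F(S)$ has no wandering domain and Theorem \ref{ss2} gives that the forward orbit $\{U_{f}:f\in S\}$ of $U$ contains a stabilizer of $U$ of cofinite index; hence any $V$ in the forward orbit of $U$ is, after moving finitely far along the orbit, a stable basin, i.e. $S_{V}\neq\emptyset$ and $V_{f}=V$ for all $f\in S_{V}$. (One checks $S_{V}$ is a subsemigroup: $f(V)\subseteq V$ and $g(V)\subseteq V$ force $(f\circ g)(V)\subseteq V$.) It therefore suffices to classify stable basins and then transport the classification back along the forward orbit by the affine conjugating maps of $\Phi$, which preserve all five types.

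So fix a stable basin $V$ and some $f\in S_{V}$. Then $V$ is a forward-invariant component of the Fatou set of a single transcendental entire function of finite or bounded type, and by the transcendental analogue of Fatou's classification (see \cite{hou}) $V$ is an attracting basin, a super-attracting basin, a parabolic basin, a Siegel disc or a Baker domain of $f$; a Herman ring is excluded since an entire function has none {\cite[Proposition 4.2]{hou}}. The Baker case is already coherent: a Baker domain of $f$ lies in $I(f)=I(S)=I(g)$ and in $F(g)=F(S)$ for every $g\in S_{V}$, hence is a Baker domain of $g$; equivalently, by Lemma \ref{3} and Theorem \ref{1q}, $V$ is ``escaping-type'' for all of $S_{V}$ or for none. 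Thus the real content is to show that among non-escaping stable basins, whether the iterates of $f$ on $V$ converge locally uniformly to an interior fixed point (attracting, and super-attracting when that point is critical), to a boundary fixed point (parabolic), or to a rotation (Siegel), is the same for all $f\in S_{V}$.

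For this step I would take $f,g\in S_{V}$ and $\phi\in\Phi$ with $f\circ g=\phi\circ g\circ f$, recall that $\phi(z)=az+b$ with $\phi(F(S))=F(S)$, iterate the relation as in the proof of Theorem \ref{1q} to obtain $f\circ g^{n}=(\phi\circ g)^{n}\circ f$ (and the symmetric control of $g\circ f^{n}$), and exploit that $S$, and hence the family of all such composites, is normal on $V$. If $f^{n}|_{V}\to\zeta$ with $\zeta\in V$ (attracting case), then applying the relation together with normality of $\{f^{n}\circ g^{m}\}$ on $V$ forces $g$ to fix a point of $V$ with multiplier of modulus $<1$, so $V$ is attracting for $g$, and moreover forces $\phi$ to fix $V$ and to be compatible with that fixed point; the parabolic and Siegel cases are treated the same way, in the Siegel case transporting to the linearised model $z\mapsto e^{2\pi i\theta}z$ on the unit disc by the Riemann map so that the semiconjugacy relation becomes an identity among rotations. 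Hence the type is an invariant of $S_{V}$, and applying this to the stable basin reached from $U$ and transporting it back yields the theorem.

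I expect the main obstacle to be exactly this coherence argument. The affine maps $\phi$ in the nearly abelian relation may a priori permute Fatou components, so one must first show that, for $f,g\in S_{V}$, the associated $\phi$ restricts to a symmetry of $V$ — using that $V$ is a fixed component of both $f$ and $g$ — and then match the dynamical data (the interior or boundary fixed point, its multiplier, the rotation number) of distinct elements of $S_{V}$. Once this is in place the five cases close by a uniform normality-plus-(semi)conjugacy argument, and the passage from $U$ to a general $V$ in its forward orbit is routine, since conjugation by a map $z\mapsto az+b$ carries attracting, super-attracting, parabolic, Siegel and Baker components to components of the same type.
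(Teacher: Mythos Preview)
Your reduction is exactly the paper's: invoke Theorem~\ref{ss2} to get that $V$ is a stable basin with $V_{f}=V$ for some $f\in S$, invoke Theorem~\ref{1q} to get $F(S)=F(f)$, and then apply the classical classification of periodic Fatou components of a single transcendental entire function (excluding Herman rings). The paper's proof stops there, in three lines.

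Where you diverge is the coherence step: you spend the bulk of your proposal arguing that the type of $V$ is the same for every $f\in S_{V}$, which is what Definition~\ref{1g} literally requires (``attracting'' means a subdomain of an attracting basin of \emph{each} $f\in S_{U}$, etc.). The paper's proof does not address this at all; it classifies $V$ for a single $f$ and declares the result. So you are being more scrupulous than the paper, and your use of the nearly abelian relation $f\circ g=\phi\circ g\circ f$ together with normality on $V$ is the natural tool. That said, your sketch of the coherence argument is not yet a proof: in the attracting case you assert that the relation ``forces $g$ to fix a point of $V$ with multiplier of modulus $<1$'', but you have not shown why the limit point for $g$ must lie in $V$ rather than in $\phi(V)$, nor why $\phi$ must fix $V$ setwise; and in the Siegel case passing to the linearised model requires knowing that $g|_{V}$ is already a rotation, which is what you are trying to prove. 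If you want to carry out this step rigorously, the cleaner route is to note that $F(f)=F(g)$ implies $V$ is a periodic component for both $f$ and $g$, so the classical classification applies to each separately, and then rule out mixed types (e.g.\ attracting for $f$ but Siegel for $g$) by comparing the limit functions of $\{f^{n}\}$ and $\{g^{n}\}$ on $V$, which must agree up to postcomposition by elements of $\Phi$.
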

\begin{proof}
By the theorem \ref{ss2}, $ V \subset F(S)$ is a stable basin of cofinite index, so, $ V_{f} = V $ for some $ f \in S $. By the theorem \ref{1q}, $ F(S) = F(f) $ for any $ f \in S $. By the theorem 4.5 (1, 2, 3, 5) of \cite{hou}, $ V $ is a attracting (or supper attracting), parabolic, Siegel or Baker domain of $ F(f) = F(S) $.
\end{proof}
\begin{rem}
\begin{enumerate}
\item  In above theorem, Baker domain does not exist if $ S $ transcendental semigroup of bounded type. The fact is obvious. That is, for transcendental nearly abelian semigroup $ S $ of bounded type, We have $ F(S) = F(f) $ for any $ f \in S $. However, for bounded type transcendental entire function $ f $, Fatou set $F(f)$ does not contain Baker domain {\cite [Theorem 4.29] {hou}}.
\item Under the same condition as above, all components of $ F(S) $ are simply connected. The fact is obvious. As above,  we have $ F(S) = F(f) $ for any $ f \in S $. From {\cite[Proposition 3]{ere1}}, all components of $ F(f) $ are simply connected if $ f \in \mathscr{B} $.
 \end{enumerate}
\end{rem}

In the classical transcendental dynamics, if  stable domains of a transcendental entire function $ f $ are bounded, then the Fatou Set $ F(f) $ does not contain asymptotic values. This fact holds good in transcendental semigroup if it is nearly abelian. 

\begin{theorem}
If all stable domains of a nearly abelian transcendental semi-group $ S $ are bounded, then Fatou set $ F(S) $ does not contain any asymptotic values.
\end{theorem}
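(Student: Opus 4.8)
The plan is to deduce the statement from the classical fact recorded in the Remark above, after transporting the whole situation to a single element of $S$ by means of Theorem \ref{1q}. First I would invoke Theorem \ref{1q} to get $F(S) = F(g)$ for every $g \in S$; this is the step that makes the classical theory of a single transcendental entire map available for the semigroup.

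Next I would transfer the boundedness hypothesis from $S$ to each element of $S$. Let $g \in S$ and let $U$ be a periodic component of $F(g)$, that is, $g^{n}(U) \subset U$ for some $n \geq 1$. Then $g^{n} \in S_{U}$, so the stabilizer $S_{U}$ is non-empty and $U$ is a stable basin of $S$ in the sense of Definition \ref{1g}. By hypothesis every stable basin of $S$ is bounded, hence $U$ is bounded. Since $U$ was an arbitrary periodic component, every stable domain of $g$ (taken as a transcendental entire map, that is, for the cyclic semigroup $\langle g\rangle$) is bounded.

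Then I would apply the classical result that a transcendental entire function all of whose stable (periodic) Fatou components are bounded has no asymptotic value in its Fatou set. Applying this to each $g \in S$ and using $F(g) = F(S)$ from the first step shows that $F(S)$ contains no point that is an asymptotic value of any $g \in S$, which is the assertion.

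The delicate point, and the one I expect to be the main obstacle, is the classical implication invoked in the last paragraph. Its proof runs through the observation that if $w \in F(g)$ is an asymptotic value, witnessed by a curve $\Gamma(t) \to \infty$ with $g(\Gamma(t)) \to w$, then for all large $t$ the tail of $\Gamma$ lies in one Fatou component $V$, which is therefore unbounded, and $g(V)$ meets the component $U$ containing $w$, so $V$ is mapped into $U$. Following the forward orbit of $V$ and using the no-wandering-domain structure (or, more directly, that a forward image of an unbounded Fatou component of a transcendental entire map is unbounded) one is forced into an unbounded periodic component, contradicting the boundedness established above. One must also make sure that \emph{asymptotic value} is read consistently for a semigroup (here: an asymptotic value of some element of $S$), and that, when $S$ is not assumed to be of finite or bounded type, the routing of the orbit of $V$ to a periodic component still goes through; this is precisely the place where extra care, or an additional hypothesis, may be required.
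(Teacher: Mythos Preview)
Your approach is essentially the paper's: invoke Theorem~\ref{1q} to get $F(S)=F(g)$ for every $g\in S$, then apply the classical single-map result (Theorem~4.16 of \cite{hou}) to each $g$. In fact you are more careful than the paper, which cites the classical theorem directly without writing out the step you supply---namely, that a periodic component of $F(g)$ has $g^{n}\in S_{U}$ and is therefore a stable basin of $S$, hence bounded by hypothesis; the paper takes this passage for granted.
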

\begin{proof}
By the theorem \ref{1q}, $F(S) = F(f)$ for any $ f \in S $.  By the theorem 4.16 of \cite{hou}, $F(f)$  does not contain any asymptotic value of $ f $. Hence the result follows.
\end{proof}

\end{document}